\title{
 The exact asymptotic of the  collision  time
tail distribution  for independent Brownian particles with different drifts.}
\author{Zbigniew Pucha\l a$^{1,2,3}$ and  Tomasz Rolski$^{1,2}$ }
\newcommand{\makepcx}[3]
{\setlength{\unitlength}{1cm}
\begin{picture}(#1,#2)(0,0)
\put(0,#2){\special{em:graph #3.pcx}}
\end{picture}
}
\newcommand{\centerpcx}[3]
{\begin{center}
  \setlength{\unitlength}{1cm}
  \begin{picture}(#1,#2)(0,0)
     \put(0,#2){\special{em:graph #3.pcx}}
  \end{picture}
 \end{center}
}
\newcommand{\Prob}{{\rm I\hspace{-0.8mm}P}}
\newcommand{\Exp}{{\rm I\hspace{-0.8mm}E}}
\newcommand{\iz}{{\rm \rlap Y\kern 2.2pt Y}}
\newcommand{\bbR}{{\rm I\hspace{-0.8mm}R}}
\newcommand{\bbZ}{\mathbb{Z}}
\newcommand{\bfA}{\boldsymbol A}
\newcommand{\bfU}{\boldsymbol U}
\newcommand{\bfX}{\boldsymbol X}
\newcommand{\bfa}{\boldsymbol a}
\newcommand{\bfm}{\boldsymbol m}
\newcommand{\bff}{\boldsymbol f}
\newcommand{\bfs}{\boldsymbol s}
\newcommand{\bfk}{\boldsymbol k}
\newcommand{\bfx}{\boldsymbol x}
\newcommand{\bfy}{\boldsymbol y}
\newcommand{\bfz}{\boldsymbol z}
\newcommand{\bfalpha}{\boldsymbol \alpha}
\newcommand{\bfxi}{\boldsymbol \xi}
\newcommand{\calF}{{\cal F}}
\newcommand{\refs}[1]{(\ref{#1})}
\newcommand{\proof}{\noindent {\it Proof.\ }}
\newcommand{\halmos}{\newline\vspace{3mm}\hfill $\Box$}
\newcommand{\ind}{1\hspace{-1mm}{\rm I}}
\renewcommand{\theequation}%
{{\rm
\arabic{section}.\arabic{equation}}}
\newcommand{\diag}{\mbox{\rm diag}}
\newcommand{\ud}{{\,{\rm d}}}
\newcommand{\Van}{\Delta}
\newcounter{mylistcnt}
\renewcommand{\themylistcnt}{{\rm({\roman{mylistcnt}})}}
\newcounter{zad}
\newtheorem{Th}{Theorem}[section]
\newtheorem{Prop}[Th]{Proposition}
\newtheorem{Lemma}[Th]{Lemma}
\newtheorem{Ex}{Example}[section]
\newtheorem{Defin}[Th]{Definition}
\newtheorem{D}{}[section]
\newtheorem{Rem}{Remark}[section]
\begin{document}
\maketitle
\stepcounter{footnote}\footnotetext{ Mathematical Institute,
University of Wroc\l aw, pl. Grunwaldzki 2/4, 50-384 Wroc\l aw,
Poland} 
\stepcounter{footnote}\footnotetext{This work  was partially
supported by
a Marie Curie Transfer of Knowledge Fellowship of the
European Community's Sixth Framework Programme: Programme HANAP under
contract number MTKD-CT-2004-13389. }
\stepcounter{footnote}\footnotetext{This work  was partially
supported by This work  was partially
supported by KBN Grant N201 049 31/3997 (2007).}
\begin{abstract}
In this note we consider the time of the collision $\tau$ for
$n$ independent  Brownian motions  $X^1_t,\ldots,X_t^n$
with drifts $a_1,\ldots,a_n$, each starting from $\bfx=(x_1,\ldots,x_n)$,
where $x_1<\ldots<x_n$. We show the exact  asymptotics
of $\Prob_{\bfx}(\tau>t) = Ch(\bfx)t^{-\alpha}e^{-\gamma t}(1 + o(1))$ as $t\to\infty$
and identify $C,h(\bfx),\alpha,\gamma$ in terms of the drifts.
\\
\vskip 0.2cm \noindent {\em Keywords:}
Brownian motion with drift, collision time.
\\
\vskip 0.1cm \noindent AMS 2000 Subject Classification: Primary:
60J65.
\end{abstract}

\newpage
\section{Introduction and results}
Let $W=\{\bfy: y_1<\ldots<y_n \}$ be the Weyl chamber.
 Consider $\bfX_t=(X^1_t, \ldots, X^n_t)$, wherein coordinates
are  independent Brownian motions with unit variance parameter, drift vector
$\bfa=(a_1,\ldots,a_n)$ and starting point $\bfX_0=\bfx\in W$.
In this paper we study the collision time $\tau$, which is the
exit time of $\bfX_t$ from the Weyl chamber, i.e.
$$\tau=\inf\{t>0:\bfX_t\notin W\}\;.$$
For identical drifts $a_1=\ldots =a_n$ , say $a_i\equiv 0$,
the celebrated Karlin-McGregor formula states (see \cite{karlinmcgregor})
\begin{eqnarray}\label{KarlinMcGregor}
\Prob( \tau > t ; \bfX_t \in \ud \bfy) = \det \left[p_t(x_i,y_j)\right]\ud \bfy \; ,
\end{eqnarray}
where $p_t(x,y) =
\frac{1}{\sqrt{2 \pi t}} e^{- \frac{(x-y)^2}{2t}}$, which yields
the tail distribution
of $\tau$:
$$\Prob_{\bfx}(\tau>  t)=\int_W \det \left[p_t(x_i,y_j)
\right] \,\ud \bfy\;.$$
For the use of  Karlin-McGregor formula it is essential that processes
$X_t^1,\ldots,X^n_t$ are independent copies of the same strong Markov, with skip-free realizations
process, starting at $t=0$ from $\bfx\in W$.
In this case the asymptotic of  $\Prob_{\bfx}(\tau>  t)$ was first studied by
Grabiner \cite{grabiner} (for the Brownian case) (see also  proofs by
Doumerc and O'Connell \cite{doumerc} and Pucha\l a~\cite{puchala}) Later
 Pucha\l a \& Rolski \cite{puchalarolski}) showed that
this asymptotic is also true for the Poisson and continuous time random walk case.
The above mentioned asymptotics is:
\begin{equation}\label{glowna.asymptotyka}
\Prob_{\bfx}(\tau>t)\sim D\Van(\bfx)t^{-n(n-1)/4},
\end{equation}
where
$\Van(\bfx)=\det \left[\left(x_i^{(j-1)}\right)_{i,j=1}^n\right]$ is the Vandermonde determinant, and
\begin{equation}\label{stalaC}
D=\left( 2\pi \right) ^{-n/2} c_n \int_{W}e^{-\frac{%
\left\vert \bfy\right\vert ^{2}}{2}} \Van \left( \bfy\right) d\bfy\;,
\end{equation}
  for $t\to\infty$. Here and below
$1/c_n=\prod_{j=1}^{n-1}j!$.

In this note we study the same problem, however for Brownian motions with different drifts.
For this we derive first, in Section \ref{s.formula}, a formula for $\Prob_{\bfx}(\tau>t)$ by the change of  measure.
It is apparent that possible results must depend on the form of drift vector $\bfa$. For example we can
analyze all cases for $n=2$, because in this case the collision equals to the first passage to zero
of the Brownian process $X^2_t-X^1_t$, for which the density function
is known (see e.g. \cite{borodinsalminen}). Hence
$$
\Prob_{\bfx}(\tau>t)= \int_{t/2}^{\infty} \frac{x}{\sqrt{2 \pi} s ^{3/2}} \exp \left[ - \frac{(x + a s)^2}{2 s}\right] \ud s\;,
$$
where $x=x_2-x_1$ and $a=a_2-a_1$. This yields
\begin{eqnarray*}
\Prob_{\bfx}(\tau>t)=
\left\{
\begin{array}{ll}
 \frac{ 2^{5/2}}{a^2\sqrt{2 \pi}} x e^{a x} t^{-3/2} e^{-t a^2 /4 } \, \,(1 + o(1))\;,  & a_1>a_2\\[0.3cm]
\frac{2^{\frac{3}{2}} }{\sqrt{2 \pi}} x t^{-\frac{1}{2}}\, \, (1 + o(1))\;,   & a_1=a_2\\[0.3cm]
1 - e^{-a x} + o(1)  & a_1<a_2\;.
\end{array}
\right.
\end{eqnarray*}

For general $n$ the situation is much more complex and different scenarios are possibles. For example the drifts can be diverging
and then $\Prob_{\bfx}(\tau>t)$ tends to a  positive constant, which the situation was analyzed by
Biane {\it et al} \cite{bianeetal}. Another case is when all drifts are equal, in which the case the probability $\Prob_{\bfx}(\tau>t)$
is polynomially decaying, as it was found by Grabiner \cite{grabiner}. However there are various situations when the probabilities
are exponentially decaying with polynomial prefactors. The full characterization depends on a concept of
the stable partition
of the drift vector, which the notion is introduced in Section \ref{s.stable}. In Section \ref{s.main} we state the main theorem,
which shows all possible exact asymptotics of  $\Prob_{\bfx}(\tau>t)$ in from of $Ch(\bfx)t^{-\alpha}e^{-\gamma t}$,
where formulas for $C$,$\alpha$ and $\gamma$ are given in terms of the stable partition of the drift vector.

\section{Formula for $\Prob_{\bfx}(\tau>t)$.}\label{s.formula}
We note our basic probabilistic space with natural history filtration $(\Omega , \calF, (\calF_t) , \Prob_{\bfx})$ and
consider on it process $\bfX_t$ as defined in the Introduction.
Unless otherwise stated we tacitly assume that $\bfx \in W$.
We start off a lemma on the change of measure for the Brownian case,
which the proof can be found for example in Asmussen \cite{APQ},
Theorem 3.4.
Let $M_{t} = e^{<\bfalpha,\bfX_t>}/\Exp e^{<\bfalpha,\bfX_t>}$ be a Wald  martingale.
For a probability measure $\Prob_{\bfx}$ its restriction to $\calF_t$ we denote by $\Prob_{\bfx|t}$.
Let $\tilde{\Prob}_{\bfx}$ be a probability measure
obtained by the change of measure $\Prob_{\bfx}$ with the use of martingale
$M_t$, that is defined by a family of measures $\tilde{\Prob}_{\bfx|t}=M_t\,\ud \Prob_{\bfx|t}$,\ $t\ge0$.
For the theory we refer e.g. to  Section XIII.3 in \cite{APQ}
\begin{Lemma}\label{l.zamiana} If $\bfX_t$ is a Brownian motion with drift $\bfa$ under $\Prob_{\bfx}$, then
this process is a Brownian motion with drift $\bfa+\bfalpha$ under $\tilde{\Prob}_{\bfx}$.
\end{Lemma}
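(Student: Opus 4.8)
The plan is to verify the claim at the level of finite-dimensional distributions of the increments, which suffices: the collision-time analysis only ever uses the restrictions $\tilde{\Prob}_{\bfx|t}$, and since $\tilde{\Prob}_{\bfx|t}$ and $\Prob_{\bfx|t}$ are mutually absolutely continuous on each $\calF_t$, continuity of sample paths is automatically inherited. First I would record the normalizing constant. Under $\Prob_{\bfx}$ the coordinates of $\bfX_t$ are independent with $X^i_t\sim N(x_i+a_i t,\,t)$, so $\langle\bfalpha,\bfX_t\rangle$ is Gaussian with mean $\langle\bfalpha,\bfx\rangle+\langle\bfalpha,\bfa\rangle t$ and variance $|\bfalpha|^2 t$; hence
\[
\Exp_{\bfx}\,e^{\langle\bfalpha,\bfX_t\rangle}=\exp\!\left(\langle\bfalpha,\bfx\rangle+\langle\bfalpha,\bfa\rangle t+\tfrac12|\bfalpha|^2 t\right).
\]
In particular $M_t$ is well defined, and as a Wald martingale it has mean one; this martingale property is precisely what makes the family $\{\tilde{\Prob}_{\bfx|t}\}$ consistent and hence a bona fide change of measure.

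Next I would compute the joint characteristic function of the increments under the new measure. Fix $0=t_0<t_1<\dots<t_k$ and real vectors $\bfu_1,\dots,\bfu_k$. By the definition of the change of measure,
\[
\tilde{\Exp}_{\bfx}\exp\!\Big(i\sum_{j=1}^k\langle\bfu_j,\bfX_{t_j}-\bfX_{t_{j-1}}\rangle\Big)=\Exp_{\bfx}\Big[M_{t_k}\exp\!\Big(i\sum_{j=1}^k\langle\bfu_j,\bfX_{t_j}-\bfX_{t_{j-1}}\rangle\Big)\Big].
\]
Writing $\langle\bfalpha,\bfX_{t_k}\rangle=\langle\bfalpha,\bfx\rangle+\sum_{j}\langle\bfalpha,\bfX_{t_j}-\bfX_{t_{j-1}}\rangle$ and inserting the explicit normalizer above, the right-hand side factorizes over the increments, because under $\Prob_{\bfx}$ the increments $\bfX_{t_j}-\bfX_{t_{j-1}}$ are independent and distributed as $N\big(\bfa(t_j-t_{j-1}),\,(t_j-t_{j-1})\bfI\big)$.

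Each factor is then the value at the complex argument $i\bfu_j+\bfalpha$ of the Gaussian moment generating function of a single increment, divided by its value at $\bfalpha$; completing the square shows this equals $\exp\!\big(i\langle\bfu_j,(\bfa+\bfalpha)(t_j-t_{j-1})\rangle-\tfrac12|\bfu_j|^2(t_j-t_{j-1})\big)$. Multiplying over $j$ yields exactly the joint characteristic function of independent Gaussian increments with drift $\bfa+\bfalpha$ and identity covariance rate. Since these finite-dimensional distributions coincide with those of a Brownian motion with drift $\bfa+\bfalpha$ started at $\bfx$, and path continuity carries over by mutual absolute continuity on each $\calF_t$, the identification is complete.

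I expect the only genuinely delicate point to be the bookkeeping in the factorization step: one must keep the normalizing constant attached to the correct increment so that the $\langle\bfalpha,\bfx\rangle$ term and the per-increment normalizers cancel cleanly against the numerator. The martingale/consistency check and the Gaussian square-completion are otherwise routine, and indeed this is the content of the cited Theorem 3.4 in \cite{APQ}.
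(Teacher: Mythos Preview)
Your argument is correct: computing the joint characteristic function of the increments under the tilted measure and completing the square is a clean and standard way to identify the law of $\bfX_t$ under $\tilde{\Prob}_{\bfx}$. The paper itself does not supply a proof of this lemma at all---it simply states the result and refers the reader to Asmussen \cite{APQ}, Theorem~3.4---so your proposal in fact goes beyond what the paper provides.
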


The sought for formula for the tail distribution of the collision time is given
in the next proposition.
\begin{Prop}\label{GBM.Prop.assym.po.zamianie}
\begin{eqnarray}\label{GBM.Prop.assym.po.zamianie.wzor}
\lefteqn{\Prob_{\bfx}(\tau>t) = } && \nonumber \\
&=&
(2\pi)^{-n/2}
e^{-<\bfa,\bfx> -||\bfx||^2/2t}
\int_W e^{-||\bfy - \bfa \sqrt{t}||^2 /2}
\det[e^{x_iy_j/\sqrt{t}}]\ud\bfy\;.
\end{eqnarray}
\end{Prop}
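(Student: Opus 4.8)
The plan is to remove the drift by a change of measure, reducing to the driftless Karlin--McGregor setting, and then to carry out the bookkeeping that produces the stated Gaussian form. Write $\Prob^0_{\bfx}$ for the measure under which $\bfX_t$ is a driftless Brownian motion started at $\bfx$, and $\Exp^0$ for the corresponding expectation. By Lemma~\ref{l.zamiana}, applied with $\bfalpha=\bfa$, the drifted measure $\Prob_{\bfx}$ is recovered from $\Prob^0_{\bfx}$ through the Wald martingale $M_t=e^{<\bfa,\bfX_t>}/\Exp^0 e^{<\bfa,\bfX_t>}$, i.e. $\Prob_{\bfx|t}=M_t\,\ud\Prob^0_{\bfx|t}$. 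Since $\{\tau>t\}\in\calF_t$, this gives directly
\begin{equation*}
\Prob_{\bfx}(\tau>t)=\Exp^0\!\left[M_t\,\ind_{\{\tau>t\}}\right].
\end{equation*}
A short computation of the normalizing constant, using $\bfX_t=\bfx+\bfB_t$ under $\Prob^0_{\bfx}$, yields $\Exp^0 e^{<\bfa,\bfX_t>}=e^{<\bfa,\bfx>+||\bfa||^2 t/2}$, so that $M_t=e^{<\bfa,\bfX_t>-<\bfa,\bfx>-||\bfa||^2t/2}$ depends on the path only through the terminal position $\bfX_t$.

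Next I would integrate against the joint law of $(\tau,\bfX_t)$. Because $M_t$ is a function of $\bfX_t$ alone and $\{\tau>t\}\in\calF_t$, the Karlin--McGregor formula \refs{KarlinMcGregor} for the driftless process gives
\begin{equation*}
\Prob_{\bfx}(\tau>t)=\int_W e^{<\bfa,\bfy>-<\bfa,\bfx>-||\bfa||^2t/2}\,\det\!\left[p_t(x_i,y_j)\right]\ud\bfy.
\end{equation*}
The remaining work is algebraic. Pulling the common scalar $(2\pi t)^{-1/2}$ out of each of the $n$ rows of the kernel determinant contributes $(2\pi t)^{-n/2}$, and expanding $(x_i-y_j)^2=x_i^2-2x_iy_j+y_j^2$ lets me factor the row-dependent terms $e^{-x_i^2/2t}$ and the column-dependent terms $e^{-y_j^2/2t}$ out of the determinant, leaving $\det[e^{x_iy_j/t}]$ together with the scalars $e^{-||\bfx||^2/2t}$ and $e^{-||\bfy||^2/2t}$. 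Finally I would substitute $\bfy=\sqrt t\,\bfz$; the Weyl chamber $W$ is invariant under this positive scaling, the Jacobian is $t^{n/2}$, and completing the square in $\sqrt t<\bfa,\bfz>-||\bfz||^2/2$ turns the exponential into $e^{-||\bfz-\bfa\sqrt t||^2/2}\,e^{||\bfa||^2t/2}$ while $x_iy_j/t$ becomes $x_iz_j/\sqrt t$.

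The only delicate point is verifying that every $t$-dependent prefactor cancels exactly: the Jacobian $t^{n/2}$ must kill the $t^{-n/2}$ coming from the kernel, and the factor $e^{||\bfa||^2t/2}$ produced by completing the square must cancel the $e^{-||\bfa||^2t/2}$ carried by $M_t$, so that only $(2\pi)^{-n/2}e^{-<\bfa,\bfx>-||\bfx||^2/2t}$ survives in front of $\int_W e^{-||\bfy-\bfa\sqrt t||^2/2}\det[e^{x_iy_j/\sqrt t}]\,\ud\bfy$. I expect this final reconciliation of the powers of $t$ and of the $e^{\pm||\bfa||^2t/2}$ terms to be the main place where a sign or exponent slip could occur; the change of measure and the invocation of \refs{KarlinMcGregor} are otherwise routine once one chooses $\bfalpha=\bfa$ and notes that $\{\tau>t\}$ is $\calF_t$-measurable, so that $M_t$ may be applied to the stopping-time event without any optional-stopping subtlety.
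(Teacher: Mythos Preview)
Your proposal is correct and follows essentially the same route as the paper: a Girsanov/Wald change of measure to reduce to the driftless case, then Karlin--McGregor, then the algebraic rewriting (factoring the Gaussian kernel, rescaling $\bfy=\sqrt{t}\,\bfz$, completing the square). The only cosmetic difference is the direction of the tilt---the paper takes $\bfalpha=-\bfa$ and writes $\Prob_{\bfx}(\tau>t)=\tilde{\Exp}_{\bfx}[M_t^{-1};\tau>t]$, whereas you start from the driftless measure and multiply by $M_t$---but both arrive at the identical intermediate formula $e^{-<\bfa,\bfx>-||\bfa||^2t/2}\int_W e^{<\bfa,\bfy>}\det[p_t(x_i,y_j)]\,\ud\bfy$, and your detailed bookkeeping of the ``algebraic manipulations'' that the paper leaves implicit is accurate.
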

\proof
We use  $\bfalpha = - \bfa$ to eliminate the drift under $\tilde\Prob_{\bfx}$.
Thus $\Prob_{\bfx}(\tau>t) =
\tilde{\Exp}_{\bfx}[M^{-1}_t;\tau>t]\;$.
Now by  Karlin-McGregor formula \refs{KarlinMcGregor} we  write
\begin{eqnarray*}
\Prob_{\bfx}(\tau>t) &=&
\tilde{\Exp}_{\bfx}[e^{<\bfa,\bfX_t>}\Exp_{\bfx}
e^{<-\bfa,\bfX_t>};\tau>t]\\
&=&
e^{<-\bfa,\bfx>-||\bfa||^2 t/2}
\int_{\bfy\in W} e^{<\bfa,\bfy>}
\det[p_t(x_i,y_j)]\ud\bfy \;,
\end{eqnarray*}
and next, algebraic manipulations yield \refs{GBM.Prop.assym.po.zamianie.wzor}.
\halmos

In the paper we use the following
vector notations. For a vector $\bfa\in\bbR^n$ we denote
$\bfa_{[i,j]}=(a_i,a_{i+1},\ldots,a_j)$ and
$\bar{a}_{[i,j]}=(a_i+a_{i+1}+\ldots+a_j)/(j-i+1)$.
We also use $\bfa_{(i,j]}=(a_{i+1},\ldots,a_j)$ and $\bfa_{(i,j)}=(a_{i+1},\ldots,a_{j-1})$.
By $\bfz^{\bfk}$, where $\bfz=(z_1,\ldots,z_m)$ and $\bfk=(k_1,\ldots,k_m)$ we denote $\prod_{j=1}^m z_j^{k_j}$.

\section{Stable partition of $\bfa$.}\label{s.stable}
Let $\bfa\in\bbR^n$. Our aim is to make a suitable partition
\begin{equation}\label{eq.partition}(a_1,\ldots,a_{\nu_1})(a_{\nu_1+1},\ldots,a_{\nu_1+\nu_2}),\ldots,
(a_{\nu_1+\ldots+\nu_{q-1}+1},\ldots,a_{\nu_1+\ldots+\nu_q})\;.
\end{equation}
of $\bfa$, where $\nu_i>0$.
For short we denote $m_1=\nu_1,m_2=\nu_1+\nu_2,\ldots,m_q=\nu_1+\ldots+\nu_q=n$. We also set $m_0 = 0$.

 We say that sequence $\bfa$ is {\em irreducible}   
if
\begin{equation} \label{nierownosci.speed.mass.lemat}
\left.
\begin{array}{ccc}
\bar{a}_{[1;1]} &>& \bar{a}_{[2;n]} \\
\bar{a}_{[1;2]} &>&\bar{a}_{[3;n]}  \\
\vdots&\vdots& \vdots\\
\bar{a}_{[1;n-1]}  &>& \bar{a}_{[n;n]}
\end{array}
\right\}\;.
\end{equation}

Suppose we have a partition defined by $m_1,\ldots,m_q$\,. The mean of
the $i^{\text{th}}$ sub-vector is denoted by $f^i=\bar{a}_{(m_{i-1};m_{i}]}$.
Furthermore we define a vector
$\bff = (f_1 , \dots , f_n)$ by
$$
f_i =f^k, \quad{\rm if}\quad  m_{k-1} < i \leq m_k\;.
$$
It is said that  partition \refs{eq.partition} of vector $\bfa$ is {\em stable} if
\begin{equation}\label{warunek1}
f^1\le f^2\le \ldots \le f^{q}
\end{equation}
 and each vector
$\bfa_{(m_{i-1},m_i]}$ is irreducible $(i=1,\ldots,q)$.
Remark that a stable partition is defined if we know $\bfm= (m_1,\ldots,m_q)$ for which \refs{warunek1} hold
and each $\bfa_{(m_{i-1},m_i]}$ is irreducible $(i=1,\ldots,q)$.
In the sequel, for a given stable partition
of $\bfa$, characters $q,\bff, \bfm$ are reserved for it.

Consider now $f_{m_1},f_{m_2},\ldots,f_{m_q}$
and define a subsequence $\bfm^{'}=(m'_1,\ldots,m{'}_{q'})$ of $\bfm=(m_1,m_2, \dots,m_q)$
as follows. Let $q'$ be the number of strict inequalities in $f^1\le f^2\le\ldots\le f^q$
plus 1. Furthermore
we define inductively by
$m'_0=0$
and for $i=1,\ldots,q'-1$
$$
m'_i=\inf\{m_{j}>m'_{i-1}:\ m_j\in\bfm, f_{m_j}< f_{m_{j+1}}\}\;.
$$
and finally we set $m_{q'}=n$.
We also define a subsequence of indices $i_0 , i_1 , \dots , i_{q'}$ inductively by
$i_0 = 0$
and
$$
i_k = \inf \{j > i_{k-1}:\ f_{m_j} < f_{m_{j+1}}\}.
$$
Hence we have
$$
f_{m'_{1}} < f_{m'_{2}} < \dots < f_{m'_{q'}}.
$$
In this case we say that $ (m'_{1}, \dots, m'_{q'})$ is a strong representation of the stable partition of $\bfa$
and $q', \ (m'_{1}, \dots, m'_{q'})$ are characters reserved for it.
Set $\nu'_i=m'_{i}-m'_{i-1}$, $(i=1,\ldots,q')$.

\medskip
\noindent {\bf Example 1}
Suppose that $\bfa=(3,1,2,5,1)$. Then $q=3$ and $m_1=2,m_2=3,m_3=5$ define the stable partition
$(3,1)(2)(5,1)$ with means $f^1=2,f^2=2,f^3=3$. furthermore $q'=2$, $m_1'=3,m_2'=5$
and $i_1=2,i_2=5$.

\begin{Prop}\label{istnienie.grupy} For each vector $\bfa$, there exists its unique stable partition.
\end{Prop}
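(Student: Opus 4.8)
The plan is to translate the combinatorial conditions defining a stable partition into a single convexity statement about the partial-sum path of $\bfa$, and then to read off both existence and uniqueness from the greatest convex minorant of that path. First I would set $S_0=0$ and $S_j=a_1+\cdots+a_j$ for $1\le j\le n$, and regard the points $(j,S_j)$, $0\le j\le n$, in the plane. The mean $f^i=\bar a_{(m_{i-1};m_i]}$ of a block equals the slope of the chord joining $(m_{i-1},S_{m_{i-1}})$ and $(m_i,S_{m_i})$, so the monotonicity condition $f^1\le\cdots\le f^q$ is exactly the statement that the polygonal line through the breakpoints $(m_i,S_{m_i})$ is convex. The key elementary computation is that, for a block $\bfa_{(m_{i-1};m_i]}$, the irreducibility conditions assert that every proper initial sub-block has average strictly larger than the block mean $f^i$; clearing denominators, this reads $S_j>S_{m_{i-1}}+f^i\,(j-m_{i-1})$ for each interior index $m_{i-1}<j<m_i$, i.e.\ each interior point $(j,S_j)$ lies \emph{strictly} above the block chord. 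Thus a partition is stable precisely when the polygonal line through its breakpoints is convex, lies weakly below every point $(j,S_j)$, and lies strictly below those $(j,S_j)$ whose index is interior to a block.

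For existence I would take $\phi$ to be the greatest convex minorant of the finite set $\{(j,S_j):0\le j\le n\}$ and declare the breakpoints to be exactly those indices $j$ with $S_j=\phi(j)$, that is, the points of the set lying on the graph of $\phi$; the endpoints $0$ and $n$ always qualify. Since the minorant of finitely many points is piecewise affine with corners only among the points, $\phi$ is a single segment between consecutive breakpoints, its slopes are nondecreasing by convexity, and every non-breakpoint index yields a point strictly above $\phi$. By the equivalence of the previous paragraph this partition satisfies both defining conditions, so a stable partition exists.

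For uniqueness, given any stable partition with breakpoints $m_0<\cdots<m_q$ I would form the piecewise-affine interpolant $\psi$ through the $(m_i,S_{m_i})$. It is convex and a minorant of the point set (equality at the $m_i$, strict inequality at interior indices), whence $\psi\le\phi$. On each $[m_{i-1},m_i]$ the convex function $\phi$ agrees with the affine $\psi$ at both endpoints---because $\psi\le\phi$ together with $\phi(m_i)\le S_{m_i}=\psi(m_i)$ forces $\phi(m_i)=\psi(m_i)$---so $\phi\le\psi$ there, and hence $\phi\equiv\psi$. Consequently the breakpoint set coincides with the contact set $\{j:S_j=\phi(j)\}$: every breakpoint is a contact point, and a contact point interior to some block would contradict strict irreducibility. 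As $\phi$ and its contact set are determined by $\bfa$ alone, the stable partition is unique.

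The point needing the most care is the role of the \emph{strict} inequality in irreducibility. It is precisely this strictness that prevents two adjacent blocks of equal mean (collinear edges of $\phi$) from being amalgamated, forcing the breakpoints to be \emph{all} points of the path lying on $\phi$ rather than only its corners; this is what upgrades existence to uniqueness, and it is exactly the phenomenon in Example 1, where $f^1=f^2=2$ yet the blocks $(3,1)$ and $(2)$ cannot be merged because $(3,1,2)$ fails irreducibility. I would therefore track the strict inequalities explicitly throughout the equivalence established in the first step.
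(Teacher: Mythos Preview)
Your argument is correct and takes a genuinely different route from the paper. The paper proves existence by induction on $n$: it appends $a_{n+1}$ to a vector with a known stable partition and either creates a new singleton block or iteratively merges $a_{n+1}$ with the last block(s), using a lemma (their Lemma~3.4) that two irreducible blocks whose means violate the ordering merge into a single irreducible block. Uniqueness is then a separate case analysis: assuming two stable partitions, the paper locates the first index where they differ and, via two auxiliary lemmas about averages within a stable partition, derives a contradiction in each of three subcases. Your approach instead recasts the problem as identifying the contact set of the partial-sum path $(j,S_j)$ with its greatest convex minorant, after which existence and uniqueness fall out simultaneously from the sandwich $\psi\le\phi\le\psi$.

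What each approach buys: the paper's inductive merging is effectively the pool-adjacent-violators algorithm, so it comes with an explicit construction procedure and the intermediate Lemmas~3.2--3.4 about averages; your convex-minorant argument is shorter, avoids any case analysis, and makes transparent why the \emph{strict} inequality in irreducibility is exactly what pins the breakpoints to the full contact set rather than merely the corners of $\phi$ --- the point you rightly flag and illustrate with Example~1. One small remark: the equivalence you use in the first paragraph, that irreducibility of a block is the same as every proper initial average exceeding the block mean, is exactly the paper's Lemma~3.2; you might cite it or give the one-line weighted-mean justification so the reader sees it is equivalent to the defining inequalities~(3.2) rather than a consequence requiring proof.
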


\medskip
Before we state a proof of Proposition \ref{istnienie.grupy} we prove few lemmas.

\begin{Lemma}\label{L1}
If $\bfa=(a_1,\ldots,a_n)$ is irreducible, then
\begin{equation} \label{nierownosci.z.fi}
\left.
\begin{array}{ccccc}
\bar{a}_{[1;1]} &>&f_n&>& \bar{a}_{[2;n]} \\
\bar{a}_{[1;2]} &>&f_n&>& \bar{a}_{[3;n]}  \\
\vdots&\vdots& \vdots&\vdots&\vdots\\
\bar{a}_{[1;n-1]}  &>&f_n&>&  \bar{a}_{[n;n]}
\end{array}
\right\}\;.
\end{equation}
\end{Lemma}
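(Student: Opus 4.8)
The plan is first to identify the middle quantity $f_n$ with the overall mean $\bar{a}_{[1;n]}$: when $\bfa$ is irreducible it forms a single partition block, so $q=1$, $m_1=n$, and $f^1 = \bar{a}_{(0;n]} = \bar{a}_{[1;n]}$; hence $f_i = \bar{a}_{[1;n]}$ for every $i$ and in particular $f_n = \bar{a}_{[1;n]}$. The whole statement then reduces to showing $\bar{a}_{[k+1;n]} < \bar{a}_{[1;n]} < \bar{a}_{[1;k]}$ for each $k = 1, \ldots, n-1$.

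The key observation I would use is that the overall mean is a strict convex combination of the prefix mean and the suffix mean. Writing $S_j = a_1 + \cdots + a_j$, one has
\begin{equation*}
f_n = \frac{S_n}{n} = \frac{k}{n}\,\frac{S_k}{k} + \frac{n-k}{n}\,\frac{S_n - S_k}{n-k} = \frac{k}{n}\,\bar{a}_{[1;k]} + \frac{n-k}{n}\,\bar{a}_{[k+1;n]},
\end{equation*}
with weights $k/n$ and $(n-k)/n$ both strictly positive precisely because $0 < k < n$. A strict convex combination of two unequal reals lies strictly between them, so the irreducibility inequality $\bar{a}_{[1;k]} > \bar{a}_{[k+1;n]}$ from \refs{nierownosci.speed.mass.lemat} immediately yields
\begin{equation*}
\bar{a}_{[k+1;n]} < f_n < \bar{a}_{[1;k]}.
\end{equation*}
This is exactly the $k$-th row of \refs{nierownosci.z.fi}, and since $k$ ranges over $1, \ldots, n-1$ all rows follow at once.

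I do not expect a genuine obstacle here: the argument is just the remark that a mean over the full index set is a weighted average of the means over a prefix and its complement, combined with the triviality that a strict convex combination of two distinct numbers is sandwiched between them. The only point that deserves a word of care is the identification $f_n = \bar{a}_{[1;n]}$, which relies on the convention that an irreducible vector is read as a single block of the partition.
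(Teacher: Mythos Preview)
Your proof is correct and is exactly the paper's argument spelled out in detail: the paper's one-line proof just says that $f_n$ is a nontrivial weighted mean of each pair $\bar a_{[1;i]}$ and $\bar a_{[i+1;n]}$, which is precisely the convex-combination observation you make explicit. Your added remark identifying $f_n$ with $\bar a_{[1;n]}$ (via $q=1$, $m_1=n$) is the only thing the paper leaves implicit.
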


\proof $f_n$ is a nontrivial weighted mean of every pair $\bar{a}_{[1;i]}$ and $\bar{a}_{[i+1;n]}$.
\halmos

\begin{Lemma}\label{lemma.przejscieSredniej}
In a stable partition, for each element  $\bfa_{(m_{i-1};m_i]}$
$$\bar{a}_{(m_{i-1};m_{i-1}+k]}\ge f_{m_i}\;.$$
\end{Lemma}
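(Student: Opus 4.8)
The plan is to obtain the inequality as an immediate specialization of Lemma~\ref{L1} to a single block of the stable partition. Fix $i\in\{1,\ldots,q\}$ and look at the block $\bfa_{(m_{i-1};m_i]}=(a_{m_{i-1}+1},\ldots,a_{m_i})$, which by the very definition of a stable partition is irreducible. Its mean is $f^i$, and since $m_{i-1}<m_i\le m_i$ the defining relation $f_j=f^k$ for $m_{k-1}<j\le m_k$ gives $f_{m_i}=f^i$; thus $f_{m_i}$ is nothing but the overall mean of this block. I would therefore treat the block as the irreducible vector to which Lemma~\ref{L1} applies, with $f_{m_i}$ playing the role that $f_n$ plays there.

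Concretely, relabel the block by $b_\ell=a_{m_{i-1}+\ell}$ for $\ell=1,\ldots,\nu_i$, where $\nu_i=m_i-m_{i-1}$ (this shift of indices is cosmetic, since Lemma~\ref{L1} uses only the internal structure of an irreducible vector). Under this relabeling the left-partial mean $\bar{a}_{(m_{i-1};m_{i-1}+k]}$ becomes $\bar{b}_{[1;k]}$ and the block mean $f_{m_i}$ becomes $\bar{b}_{[1;\nu_i]}$. Lemma~\ref{L1} then yields $\bar{b}_{[1;k]}>f_{m_i}$ for every $k=1,\ldots,\nu_i-1$, i.e. each proper left-partial mean strictly exceeds the block mean. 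For the remaining value $k=\nu_i$ the left-partial mean is the mean of the whole block, so $\bar{b}_{[1;\nu_i]}=f_{m_i}$ and we have equality. Combining the two cases gives $\bar{a}_{(m_{i-1};m_{i-1}+k]}\ge f_{m_i}$ for all admissible $k$, which is the assertion.

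I do not expect any genuine obstacle here, as the statement is essentially Lemma~\ref{L1} read off one block at a time; the only point needing a word of care is the boundary case $k=\nu_i$, which is exactly why the conclusion is stated with the non-strict $\ge$ rather than the strict inequality that holds in the interior. One should also note, when invoking Lemma~\ref{L1}, that its underlying observation---that the block mean is a convex combination of $\bar{b}_{[1;k]}$ and $\bar{b}_{[k+1;\nu_i]}$ with strictly positive weights---is available because every block length $\nu_i$ is positive, so both pieces of each split are non-empty.
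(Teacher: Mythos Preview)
Your argument is correct for $1\le k\le \nu_i=m_i-m_{i-1}$, but that is not the full range the lemma is asserting. The paper's own proof explicitly treats the case $k>m_i-m_{i-1}$, in which the partial average $\bar{a}_{(m_{i-1};m_{i-1}+k]}$ reaches past the $i$-th block into later blocks of the stable partition; and this extended range is exactly what is invoked in the uniqueness part of Proposition~\ref{istnienie.grupy} (case $m_i^2>m_{i+1}^1$). For such $k$, Lemma~\ref{L1} applied to the single block $\bfa_{(m_{i-1};m_i]}$ says nothing, because the left-partial mean you need is no longer a left-partial mean of that block alone.

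What is missing is the use of the global stability condition \refs{warunek1}. For $k>\nu_i$ one writes $\bar{a}_{(m_{i-1};m_{i-1}+k]}$ as a convex combination of $f_{m_i}=\bar{a}_{(m_{i-1};m_i]}$ and the tail average $\bar{a}_{(m_i;m_{i-1}+k]}$; the latter, by Lemma~\ref{L1} applied to the next block (or blocks) together with $f^i\le f^{i+1}\le\cdots$, is itself at least $f_{m_i}$, and the conclusion follows. Your write-up should therefore not stop at ``all admissible $k$'' meaning $k\le\nu_i$, but add this second step; otherwise the lemma as stated---and as later used---remains unproved.
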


\proof
The case $k\le m_i-m_{i-1}$ follows from Lemma \ref{L1}.
Clearly for $k= m_i-m_{i-1}$ we have equality.
Consider now $k > m_{i}-m_{i-1}$. Than  $\bar{a}_{(m_{i-1};m_{i-1}+k]}$ is
  a weighted mean of
$f_{m_i}$ and $\bar{a}_{(m_i;m_{i}+k-(m_i - m_{i-1})]}$
 and the later term is  greater or equal than $f_{m_i}$ by \refs{warunek1} and \refs{nierownosci.z.fi}.
\halmos

In the next lemma we consider two vectors $\bfa_1\in\bbR^{n_1}$ and $\bfa_2\in \bbR^{n_2}$.
The corresponding $f$-s are $f_{n_1}$ and  $f_{n_2}$ respectively.
We consider a situation of creating a new vector $(\bfa_1,\bfa_2)=(a_{1}\ldots,a_{n_1+n_2})\in\bbR^{n_1+n_2}$.

\begin{Lemma}\label{lemma.laczenieGrup}
Suppose that $\bfa_1$ and $\bfa_2$ are irreducible and  $f_{n_1} > f_{n_2}$.
Then vector $(\bfa_1,\bfa_2)$ is irreducible.
\end{Lemma}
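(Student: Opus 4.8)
I need to show that if $\bfa_1 \in \bbR^{n_1}$ and $\bfa_2 \in \bbR^{n_2}$ are each irreducible, and the overall means satisfy $f_{n_1} > f_{n_2}$, then the concatenation $(\bfa_1, \bfa_2)$ is irreducible—i.e., it satisfies all the inequalities in (3.2): for every split point $k$ with $1 \le k \le n_1+n_2-1$, the left-block mean strictly exceeds the right-block mean.

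Let me denote $n = n_1 + n_2$ and write $\bar{a}_{[1;k]}$ for the mean of the first $k$ entries, $\bar{a}_{[k+1;n]}$ for the mean of the last $n-k$ entries, of the concatenated vector. I'd split into three cases depending on where the cut $k$ falls relative to $n_1$.

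Case $k < n_1$: The left block sits entirely inside $\bfa_1$. The right block is $\bfa_{[k+1;n]}$, which is a weighted average of the tail of $\bfa_1$ (entries $k+1,\dots,n_1$) and all of $\bfa_2$. By irreducibility of $\bfa_1$ applied to its internal cut at $k$, I have $\bar{a}_{[1;k]} > \overline{(\bfa_1)}_{[k+1;n_1]}$. By Lemma 3.5 applied to the irreducible vector $\bfa_1$, the tail mean $\overline{(\bfa_1)}_{[k+1;n_1]}$ is below $f_{n_1}$, which exceeds $f_{n_2}$ by hypothesis; and the right-block mean is a weighted average of $\overline{(\bfa_1)}_{[k+1;n_1]}$ and $f_{n_2}$, hence is at most $\max$ of these but I need it strictly below $\bar{a}_{[1;k]}$. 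The cleanest route: show $\bar{a}_{[1;k]} > \overline{(\bfa_1)}_{[k+1;n_1]} \ge$ right-block mean is not quite automatic because $f_{n_2}$ could pull it in a direction I must control—this is where I need $f_{n_1}>f_{n_2}$ together with Lemma 3.5. The symmetric argument handles Case $k > n_1$, using irreducibility of $\bfa_2$ and that $f_{n_1}$ exceeds everything relevant.

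Case $k = n_1$: This is the boundary split, where the left block is exactly $\bfa_1$ and the right block is exactly $\bfa_2$, so the required inequality is precisely $f_{n_1} > f_{n_2}$, which is the hypothesis. I expect the main obstacle to be the two non-boundary cases, where the key observation is that each side's block mean is a genuine weighted average (with positive weights) of a ``small'' mean coming from one of the irreducible pieces and one of the overall means $f_{n_1}, f_{n_2}$; the strict inequality then follows by interlacing these averages using Lemma 3.5 (which bounds partial means of an irreducible vector by its overall mean $f_n$) and the strict hypothesis $f_{n_1} > f_{n_2}$. The delicate point is verifying that the weighted-average structure always orients the inequality the correct way, so I would write out the three cases explicitly and invoke Lemma 3.5 and Lemma 3.6 to pin down each comparison.
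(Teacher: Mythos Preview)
Your approach is correct and essentially the same as the paper's: case-split on the position of the cut relative to $n_1$, invoke Lemma~\ref{L1} to compare the prefix and suffix means of each irreducible piece with its overall mean $f_{n_i}$, and finish by observing that the remaining block mean is a weighted average of two quantities already controlled (so strictly below $\bar a_{[1;k]}>f_{n_1}$ in the first case, and strictly above $\bar a_{[k+1;n]}<f_{n_2}$ in the second). The paper uses only Lemma~\ref{L1}; your planned appeal to Lemma~\ref{lemma.przejscieSredniej} is unnecessary.
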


\proof
Recall that $(\bfa_1,\bfa_2)=(a_{1}\ldots,a_{n_1+n_2})\in\bbR^{n_1+n_2}$.
Suppose $1\le k\le n_1$. By Lemma \ref{L1} we have $\bar{a}_{[1;k-1]}>f_{n_1}> \bar{a}_{[k;n_1]}$, also
 $\bar{a}_{[1;k-1]}>f_{n_1}> f_{n_2}$.
Hence $\bar{a}_{[1;k-1]}>
\bar{a}_{[k;n_1+n_2]}$ becasue $ \bar{a}_{[k;n_1+n_2]}$ is a weighted mean of $\bar{a}_{[k;n_1]}$ and $f_{n_2}$.
Suppose now $n_1<k$. Then $\bar{a}_{[1;k-1]}$ is a weighted mean of $f_{n_1}$ and
$\bar{a}_{[n_1+1,k]}$ and both by Lemma \ref{L1} are greater than $\bar{a}_{[k;n_1+n_2]}$, which completes the proof.
\halmos

\medskip\noindent{\it Proof of}\ Proposition \ref{istnienie.grupy}.
The existence part is by induction with respect $n$.
For $n = 2$ we have two situations \\
\begin{enumerate}
\item if $a_1 \leq a_2$, than  $q=2$ with $m_1 = 1, \,\, m_2 = 2$ is a stable partition,
\item if $a_1 > a_2$, than $q=1$ with $m_1=2$ is a stable partition.
\end{enumerate}
Assume that there exists a \emph{stable} partition with  $q$ partition vectors of a vector $\bfa \in R^n$.
We add a new element $a_{n+1}$ at the end of vector $\bfa$ to create new one $(\bfa,a_{n+1})=(a_1,\ldots,a_{n+1})$.

We have two  situations.
\begin{enumerate}
\item If $a_{n+1} \geq f^{q}$ than in a stable partition  $a_{n+1}$ is alone in the $q+1$ partition vector.
\item  If $a_{n+1} < f^{q}$, than we proceed inductively as follow.
We use Lemma \ref{lemma.laczenieGrup} with $\bfa_1=\bfa_{[m_{q-1};m_q]}$ and $\bfa_2=(a_{n+1})$ and let $f^q$ and
$f^{q+1}=a_{n+1}$ are means of these partition vectors.
In result $(\bfa_{(m_{q-1};m_q]},a_{n+1})$ form an irreducible vector, for which we have to check
 whether condition \refs{warunek1} holds. If yes, then   we end with a stable partition, otherwise
we join the  $q-1$ partition vector with the new $q$ partition vectors and repeat the procedure. In the worst case we end up with one partition vector.
\end{enumerate}

For the uniqueness proof ,
suppose that we have two different \emph{stable} partitions: $m_1^1 < m_2^1 < \dots <m_{q_1}^1$
and $m_1^2 < m_2^2 < \dots <m_{q_2}^2$. The means of $f$s are
$(f^1)^1,\dots,(f^1)^{q_1}$

for the first partition vector and
$(f^2)^1,\dots,(f^2)^{q_2}$ for the second respectively.
Since partitions are supposely different,
 there exists $i$ such that $m_i^1 \neq m_i^2$.  We take the minimal $i$ with this property and without loss of
generality we can assume $m_i^2 > m_i^1$. Set $k = m_i^2 - m_i^1$.
We have to analaze the following cases.
\begin{enumerate}
\item $(m_i^2=m_{i+1}^1)$. We have
$$\bar{a}^2_{[m^2_{i-1}+1;k]}>\bar{a}^2_{[k+1;m^2_{i}]}$$
On the other hand  $(f^1)^i=\bar{a}_{[m^1_{i-1}+1;m_i^1]}=\bar{a}_{[m^2_{i-1}+1;m_i^1]}>
\bar{a}_{[m^1_{i}+1;m_i^2]}=\bar{a}_{[m^1_{i}+1;m_{i+1}^1]}=
(f^2)^i$ 
 and this contradics with $(f^1)^i\le (f^1)^{i+1}$.
\item $(m_i^2>m_{i+1}^1)$. We have
$\bar{a}_{[m^2_{i-1}+1;m_i^1  ;m_i^1+1;m^1_{i+1}   ]}$ and
by Lemma \ref{lemma.przejscieSredniej}
$$(f^1)^i=\bar{a}_{[m^2_{i-1}+1;m_i^1]}>\bar{a}_{[m^1_{i}+1;m_i^2]}\ge (f^1)^{i+1}\;,$$
 which is a contradiction.
\item $(m_i^2<m_{i+1}^1)$. We have by Lemma \ref{lemma.przejscieSredniej}
 $$(f^1)^i=\bar{a}_{[m^1_{i-1}+1;m_i^1]}>\bar{a}_{[m^1_{i}+1;m_i^2]}\ge (f^1)^{i+1}\;,$$
 which is a contradiction.
\end{enumerate}
The proof is completed.
\halmos

\medskip\noindent{\bf Remark}
The stable partition can be obtained by considering the following
simple deterministic dynamical system. We have $n$ particles starting from
$x_1 < x_2 < \dots < x_n$. The $i^{\text{th}}$ particle has  speed $a_i$.
Each particle moves with a constant speed on the real line until
it collides with one of its neighboring   particle (if it happens).
Then both the particles coalesce and from this time on they move with the proportional speed which is the mean of
speed of colliding particles, and so on.
Ultimately the particles will form never colliding groups, which are the same as
in the stable partition of $\bfa$. Notice that resulted grouping do not
depend on a starting position  $\bfx$.

\section{The theorem and examples.}\label{s.main}
We begin introducing some notations.
Suppose that $\bfa$ has  a stable partition with characteristics $q,(m_i),q',(m'_{i})$ respectively.
In the sequel we will use the following notations:
\begin{eqnarray}\label{GBM.gamma}
\gamma = \frac{1}{2} \sum_{\ell=1}^q \left( \frac{1}{\nu_\ell} \sum_{m_{l-1}<u<v\leq m_l} (a_u - a_v)^2 \right),
\end{eqnarray}

\begin{eqnarray}\label{GBM.alpha}
\alpha = \frac{1}{2} \left(
\sum_{j=1}^{q'} \binom{\nu'_{j}}{2}
+ (n-q) + \sum_{j=1}^q \binom{\nu_j}{2}
\right),
\end{eqnarray}
\begin{eqnarray}\label{GBM.h()}
h(\bfx)&=&
e^{-<\bfx,\bfa>} \det\left[ e^{x_i f_j} x_i^{\sum_{l=1}^{q'} (j-m'_{l-1} - 1)  \ind_{\{m'_{l-1} < j \leq m'_{l}\}}} \right] \;.
\end{eqnarray}
Moreover we define a function
\begin{eqnarray}\label{GBM.I()}
\lefteqn{I(\bfa,t)}\nonumber\\
&=&\int_{W-\bff\sqrt{t}}
e^{-\frac{1}{2}|\bfz|^2}
e^{-\frac{1}{2} \sum_{l=1}^q \left( \frac{2\sqrt{t}}{\nu_l} \sum_{m_{l-1}<u<v\leq m_l} (z_u-z_v)(a_v - a_u) \right)}
\prod_{j=1}^{q'}\Delta(\bfz_{(m'_{j-1};m'_j]})
 \ud\, \bfz\;.\nonumber\\
\end{eqnarray}

Remark that from Lemma \ref{lemma.y^2_cz1} it will follow
$$ \frac{1}{\nu_l}\sum_{m_{l-1}<u<v\leq m_l} (a_u - a_v)^2= \sum_{m_{l-1}<u<v\leq m_l} (a_u - \bar{a}^l)^2,$$
where
$$\bar{a}^l= \frac{1}{\nu_l}\sum_{u=m_{l-1}+1}^{m_l} a_u\;.$$

Using this notation we now state a proposition which is useful for calculations in some cases.
\begin{Prop}\label{GBM.MainProp}
\begin{eqnarray} \label{GBM.MainPropWzor}
\Prob_{\bfx}(\tau>t) &=&
 (2\pi)^{-n/2}\prod_{j=1}^{q{'}}c_{\nu'_j}
e^{-\gamma t}  t^{-\frac{1}{2} \sum_{j=1}^{q'} \binom{\nu'_j}{2}}  \nonumber \\
&&\times
 e^{-<\bfx,\bfa>}\det \left[e^{x_k f_j} x_k^{\sum_l^{q'} (j-m_{i_{l-1}} - 1)  \ind_{\{m_{i_{l-1}} < j \leq m_{i_l} \}}} \right]
\nonumber\\
&&\times
I(\bfa,t)\, \,  (1 + o(1)).   
\end{eqnarray}
\end{Prop}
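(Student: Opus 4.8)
The plan is to begin from the exact formula \refs{GBM.Prop.assym.po.zamianie.wzor} and to isolate the exponential rate by the shift $\bfy=\bfz+\bff\sqrt t$. As this is a translation, $\ud\bfy=\ud\bfz$ and the region $W$ becomes $W-\bff\sqrt t$, while $\|\bfy-\bfa\sqrt t\|^2=\|\bfz\|^2-2\sqrt t\,\langle\bfz,\bfa-\bff\rangle+t\|\bfa-\bff\|^2$. Two algebraic identities then align the shifted integrand with $I(\bfa,t)$. By the Remark after \refs{GBM.I()} one has $\tfrac12\|\bfa-\bff\|^2=\gamma$, so $\tfrac{t}{2}\|\bfa-\bff\|^2=\gamma t$ splits off the factor $e^{-\gamma t}$ exactly. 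Because each $f_i$ is the mean of its stable group, the elementary centering identity $\sum_{u}z_u(a_u-f^l)=\frac1{\nu_l}\sum_{u<v}(z_u-z_v)(a_u-a_v)$, summed over the stable groups, gives $\langle\bfz,\bfa-\bff\rangle=\sum_{l=1}^q\frac1{\nu_l}\sum_{m_{l-1}<u<v\le m_l}(z_u-z_v)(a_u-a_v)$, which is precisely the exponent inside $I(\bfa,t)$. After the shift the determinant becomes $\det[e^{x_if_j}e^{x_iz_j/\sqrt t}]$, and the prefactor carries an extra $e^{-\|\bfx\|^2/2t}=1+o(1)$.

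The heart of the argument is the $t\to\infty$ expansion of $D(\bfz,t):=\det[e^{x_if_j}e^{x_iz_j/\sqrt t}]$. Expanding each column by multilinearity and $e^{x_iz_j/\sqrt t}=\sum_{k\ge0}(x_iz_j)^k/(k!\,t^{k/2})$ yields $D(\bfz,t)=\sum_{(k_j)}\big(\prod_j z_j^{k_j}/(k_j!\,t^{k_j/2})\big)\det[e^{x_if_j}x_i^{k_j}]$. Since $f_j$ is constant on each super-group $(m'_{l-1},m'_l]$, the determinant $\det[e^{x_if_j}x_i^{k_j}]$ vanishes unless the exponents $k_j$ are distinct within every super-group; hence the minimal power of $t^{-1/2}$ is obtained by assigning $\{0,1,\dots,\nu'_l-1\}$ to the columns of super-group $l$, giving the factor $t^{-\frac12\sum_l\binom{\nu'_l}{2}}$. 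Antisymmetrising the minimal assignments over the permutations inside each super-group converts $\prod_j z_j^{k_j}$ into the Vandermonde $\prod_{l}\Delta(\bfz_{(m'_{l-1};m'_l]})$, the reciprocal factorials collapse to $\prod_l\prod_{k=0}^{\nu'_l-1}(k!)^{-1}=\prod_l c_{\nu'_l}$, and the surviving matrix is the $\bfz$-independent $\det[e^{x_if_j}x_i^{r_j}]$ with $r_j=j-m'_{l-1}-1$ on super-group $l$, i.e. the determinant of \refs{GBM.h()}. Its non-vanishing follows from the linear independence of the exponential-monomials $x\mapsto e^{f_jx}x^{r_j}$ evaluated at the distinct points $x_1<\dots<x_n$.

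Substituting this leading term back and pulling the $\bfz$-independent factors $\prod_l c_{\nu'_l}$, $t^{-\frac12\sum_l\binom{\nu'_l}{2}}$ and $\det[e^{x_if_j}x_i^{r_j}]$ out of the integral leaves exactly $I(\bfa,t)$; together with $(2\pi)^{-n/2}e^{-\langle\bfa,\bfx\rangle}$ and the $1+o(1)$ from $e^{-\|\bfx\|^2/2t}$ this is \refs{GBM.MainPropWzor}. The main obstacle is making the word ``leading'' rigorous: one must show that the non-minimal exponent assignments contribute only $o(1)$ relative to $I(\bfa,t)$ after integration. This is delicate because the domain $W-\bff\sqrt t$ grows with $t$ and the integrand carries the tilt $e^{\sqrt t\langle\bfz,\bfa-\bff\rangle}$, so a fixed-domain dominated-convergence argument does not apply directly. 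I would control it by bounding, uniformly in $t$, the $e^{-\|\bfz\|^2/2}e^{\sqrt t\langle\bfz,\bfa-\bff\rangle}$-moments of the polynomial factors produced by the higher-order terms over $W-\bff\sqrt t$, and checking that each extra half-power of $t$ in the denominator strictly dominates the growth of the associated moment, so that the correction is $o(1)\cdot I(\bfa,t)$.
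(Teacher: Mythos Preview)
Your proposal is correct and follows essentially the same route as the paper: the shift $\bfy=\bff\sqrt t+\bfz$, the algebraic identities reducing $\|\bfz+(\bff-\bfa)\sqrt t\|^2$ to $|\bfz|^2+2\gamma t$ plus the group-wise tilt (the paper packages these as Lemmas \ref{lemma.y^2_cz1}--\ref{lemma.y^2}), and the column-by-column Taylor expansion of $\det[e^{x_if_j}e^{x_iz_j/\sqrt t}]$ with the observation that equal $f_j$'s force distinct exponents within each super-group (the paper's Lemma \ref{GBM.DetLemma}). Your treatment of the remainder is in fact more explicit than the paper's, which simply notes that the higher $T_k$'s are polynomials of higher degree in $\bfz$ and hence decay faster after the rescaling \refs{podstawienie.xi}.
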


Remark that formula \refs{GBM.MainPropWzor}
 does not give us straightforward asymptotic because integral $I(\bfa,t)$ depends on~$t$.
However in some cases this dependence vanishes
and this is why Proposition \ref{GBM.MainProp} can be sometimes useful.

 The next theorem gives us asymptotic for all cases.
\begin{Th}\label{GBM.MainTh}
For some $C$ given below, as $t \rightarrow \infty $
\begin{eqnarray*}
\Prob_{\bfx}(\tau>t)
&=&
C h(\bfx) t^{- \alpha} e^{-\gamma t} (1 + o(1)),
\end{eqnarray*}
$\gamma$, $\alpha$, and $h(\bfx)$ are defined in \refs{GBM.gamma},\refs{GBM.alpha},\refs{GBM.h()} respectively.
\end{Th}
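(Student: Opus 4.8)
The plan is to start from the exact factorization in Proposition~\ref{GBM.MainProp}: formula \refs{GBM.MainPropWzor} already exhibits the factors $e^{-\gamma t}$, the power $t^{-\frac12\sum_{j=1}^{q'}\binom{\nu'_j}{2}}$, and (after matching $m'_l=m_{i_l}$ from the strong representation) exactly $h(\bfx)$ as defined in \refs{GBM.h()}. Hence the theorem is equivalent to the single asymptotic statement
\[
I(\bfa,t)\ \sim\ \widetilde C\, t^{-\frac12\left((n-q)+\sum_{l=1}^{q}\binom{\nu_l}{2}\right)},\qquad t\to\infty,
\]
for a finite positive constant $\widetilde C$ independent of $\bfx$; granting this, the powers of $t$ add up to $-\alpha$ (see \refs{GBM.alpha}) and $C=(2\pi)^{-n/2}\bigl(\prod_{j=1}^{q'}c_{\nu'_j}\bigr)\widetilde C$, which is why $C$ is $\bfx$-free. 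So the entire proof reduces to an asymptotic analysis of the integral $I(\bfa,t)$ from \refs{GBM.I()}.

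First I would simplify the $\sqrt t$-exponent in \refs{GBM.I()}. Using the elementary block identity $\sum_{m_{l-1}<u<v\le m_l}(z_u-z_v)(a_u-a_v)=\nu_l\sum_{k}z_k a_k-(\sum_k z_k)(\sum_k a_k)$, the cross term collapses to the linear functional $\sqrt t\,\langle\bfz,\bfa-\bff\rangle$, and Abel summation inside each block rewrites it as $-\sqrt t\sum_{l}\sum_{k}S^l_k\,(z_{k+1}-z_k)$, where $S^l_k=\sum_{m_{l-1}<i\le k}(a_i-f_i)$ are the within-block partial sums. The decisive point is a sign analysis: irreducibility forces $S^l_k>0$ at every interior index (this is Lemma~\ref{L1}, since $S^l_k>0\Leftrightarrow\bar a_{[m_{l-1}+1;k]}>f_{m_l}$), whereas $S^l_k=0$ exactly at the junctions between consecutive original blocks sharing a common mean inside one strong block. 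Because $f_i=f_{i+1}$ within a block, the moving domain $W-\bff\sqrt t$ always imposes $z_i<z_{i+1}$ there, so $\langle\bfz,\bfa-\bff\rangle\le 0$ on the whole domain and the integrand is dominated, uniformly in $t$, by $e^{-|\bfz|^2/2}\prod_j|\Delta(\bfz_{(m'_{j-1};m'_j]})|$.

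Next comes the multiscale localization, which is the heart of the argument. Between distinct strong blocks the constraint in $W-\bff\sqrt t$ reads $z_i-z_{i+1}<(f_{i+1}-f_i)\sqrt t$ with $f_{i+1}>f_i$, so it becomes vacuous and the domain converges to $\prod_{j=1}^{q'}W_{(m'_{j-1};m'_j]}$, along which the strong blocks decouple. The factor $e^{-\sqrt t\sum S^l_k(z_{k+1}-z_k)}$ then forces each of the $\sum_l(\nu_l-1)=n-q$ within-block gaps to be of order $1/\sqrt t$; I would substitute $z_{k+1}-z_k=u_k/\sqrt t$ on these directions, producing the Jacobian $t^{-(n-q)/2}$ and the limiting exponent $e^{-\sum_k S^l_k u_k}$. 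The junction gaps with $S^l_k=0$ stay of order one, so the coordinates of each original block collapse to a common centre $\zeta_l$. Correspondingly $\prod_j\Delta(\bfz_{(m'_{j-1};m'_j]})$ factors: the within-block pairs contribute $t^{-\frac12\sum_l\binom{\nu_l}{2}}$ times a Vandermonde in the rescaled gaps, while the cross-block pairs survive as $\prod_{l<l'}(\zeta_{l'}-\zeta_l)^{\nu_l\nu_{l'}}$. Collecting $t^{-(n-q)/2}$ and $t^{-\frac12\sum_l\binom{\nu_l}{2}}$ gives precisely the exponent claimed for $I(\bfa,t)$.

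Finally I would pass to the limit by dominated convergence, using the uniform bound from the second paragraph on the localized directions and the Gaussian on the free centre variables, so that $\widetilde C$ emerges as an explicit $t$-independent integral factorizing over the strong blocks into (i) internal Grabiner-type integrals $\int_{u>0}\Delta_{\mathrm{loc}}(u)\,e^{-\sum_k S^l_k u_k}\,\ud u$ over the rescaled within-block gaps, one per original block (compare the constant in \refs{glowna.asymptotyka}), and (ii) a Gaussian integral over the ordered centres $\zeta_l$ against $\prod_{l<l'}(\zeta_{l'}-\zeta_l)^{\nu_l\nu_{l'}}$ for each strong block; this produces the explicit $C$. I expect the main obstacle to be exactly this step: justifying the interchange of limit and integration under two simultaneous scalings over the moving domain $W-\bff\sqrt t$, and verifying that the Vandermonde genuinely splits into the stated internal and surviving cross-block parts with no lower-order remainder corrupting the constant. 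The sign bookkeeping $S^l_k>0$ from Lemma~\ref{L1} (together with $S^l_k=0$ at block junctions, guaranteed by \refs{warunek1} and Lemma~\ref{lemma.przejscieSredniej}) is what makes both the localization and the domination work, so I would establish it carefully before taking any limit.
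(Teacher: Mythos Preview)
Your proposal is correct and follows essentially the same route as the paper: both reduce the theorem to the asymptotic $I(\bfa,t)\sim\widetilde C\,t^{-\frac12((n-q)+\sum_l\binom{\nu_l}{2})}$, pass to gap variables (the paper via the explicit matrix $\bfA$ of Lemma~\ref{MatrixIdent}, you via Abel summation and the partial sums $S^l_k$), rescale the $n-q$ within-block gaps by $1/\sqrt t$ to extract the Jacobian $t^{-(n-q)/2}$, and show that the Vandermonde product contributes the remaining factor $t^{-\frac12\sum_l\binom{\nu_l}{2}}$ (this is the paper's Lemma~\ref{asymptoticH}). Your use of block centres $\zeta_l$ in place of the paper's residual gap variables $\xi_{m_i}$ (after integrating out $s_n$ to get $\sqrt{2\pi n}$) is an equivalent parametrization and yields the cleaner strong-block factorization of the constant you describe, which the paper leaves as the single integral $A_3$.
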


To show $C$ we need few more definitions. Let
\begin{equation}\label{def.H}
H(s_1,\ldots,s_{\ell})=\prod_{1\le i\le j\le \ell+1}(s_i+\ldots+s_{j-1}).
\end{equation}
Define now
\begin{eqnarray}\label{GBM.C}
C =A_1\times A_2\times A_3\;,
\end{eqnarray}
where
$$A_1=(2\pi)^{-n/2} \sqrt{2\pi n}
\prod_{j=1}^{q'}c_{\nu'_j},
$$
\begin{eqnarray*}
\lefteqn{A_2=\idotsint\limits_{\xi_i>0 : i \notin \{m_1,\dots , m_q\} }
e^{-\frac{1}{2}\sum_{l=1}^q \left( \frac{2}{\nu_{l}} \sum_{m_{l-1}<u<v\leq m_l} (\xi_u + \dots + \xi_{v-1})(a_u - a_v) \right)}}
\nonumber \\
&&\hspace{4.0cm} \times \prod_{i=1}^q H \left(\bfxi_{(m_{i-1} ;m_i - 1)}\right)
\prod_{ i \notin \{m_1,\dots , m_q\}}\ud \xi_i \;,
\end{eqnarray*}
and
\begin{eqnarray*}
\lefteqn{A_3=\idotsint\limits_{\xi_i > 0: i \in \{m_1, \dots , m_q \} \setminus \{ m_{l_1}, \dots , m_{l_{q'}} \} }
\quad\idotsint\limits_{\xi_i > - \infty : i \in \{ m_{l_1}, \dots , m_{l_{q'}} \} }
e^{-\frac{1}{2}
\left(
    \sum_{k,l\in \{m_1, \dots, m_q\}} S_{kl} \xi_k \xi_l
\right)}}
\nonumber \\
&&\hspace{2.6cm}\times
\prod_{k=0}^{q-1}
\prod_{i : \{i,i+1,\dots , i+k\} \atop \in \{1, \dots , q \} \setminus \{ {l_1}, \dots , {l_{q'}} \}}
\left(
    \sum_{j=0}^{k} \xi_{m_{i+j}}
\right)^{\nu_i \nu_{i+k+1}}
\prod_{   i \in \{m_1,\dots , m_q\}  }\ud \xi_i\;,
\end{eqnarray*}
where $S_{kl}=(n-2)k$ for $k\le l$ and $S_{kl}=S_{lk}$.
In the remaining part of this section
we diplay some special cases.

\bigskip

\noindent {\bf Example 2}
$(a_1 = a_2 = \dots = a_n)$ This is no drift case.
Here $q = n$ and $m_1 = 1, m_2 = 2 , \dots , m_n = n$, also $q' = 1$ and $m'_{1} = n$.
In result $f_{m_1} = a_1 , \dots , f_{m_n} = a_n$.
Let $a$ be the common value of the drift.
Using Proposition \ref{GBM.MainProp} we have
\begin{eqnarray*}
\Prob_{\bfx}(\tau>t) &=&
(2\pi)^{-n/2}c_n
e^{-<\bfx,\bfa>}
\det \left[e^{x_k f_j} x_k^{j-1} \right]
t^{-\frac{1}{2} \binom{n}{2}}\\
&&\times
\int_{W-\bff\sqrt{t}}
e^{-\frac{|\bfz|^2}{2}}
\Van_{n}(\bfz_{[1;n]})  \ud \bfz \,(1 + o(1)).
\end{eqnarray*}
First we notice that since all the coordinates in vector $\bff$ are the same, we have
$$
\det \left[e^{x_k f_j} x_k^{j-1} \right]  = e^{<\bfx,\bff>} \det \left[x_k^{j-1} \right] = e^{<\bfx,\bfa>} \Delta (\bfx).
$$
Furthermore $W - \bff \sqrt{t} = W$ because
$y_1 < y_2 < \dots < y_n$  if and only if   $ y_1 + a\sqrt{t} < y_2 + a\sqrt{t} < \dots < y_n + a\sqrt{t}$.
Finally we write
\begin{eqnarray*}
\Prob_{\bfx}(\tau>t) &=& C \; h(\bfx) \; t^{- \alpha} \; (1 + o(1)),
\end{eqnarray*}
where
\begin{eqnarray*}
&\alpha&=\ \frac{1}{2} \binom{n}{2},
\\
&h(\bfx)&=\ \Van_{n}(\bfx),
\\
&C&=\ (2\pi)^{-n/2}c_n
\int_{W} e^{-\frac{|\bfz|^2}{2}}\Delta(\bfz) \ud \bfz .
\end{eqnarray*}

\bigskip
Before we state the next example we prove the following lemma.
\begin{Lemma}\label{lemmaW->Rn}
If $\bfa \in W$, 
then $\{W - \bfa t \} \rightarrow \bbR^n$ as $t \rightarrow \infty$.
\end{Lemma}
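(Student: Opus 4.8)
The plan is to read the set convergence $W-\bfa t\to\bbR^n$ as the statement that the regions $W-\bfa t$ increase monotonically in $t$ and that their union exhausts $\bbR^n$; equivalently, that the indicators satisfy $\ind_{W-\bfa t}(\bfz)\to 1$ for every fixed $\bfz\in\bbR^n$. This is the form in which the lemma will later be used, namely to pass a limit under an integral sign (as in $I(\bfa,t)$) by monotone or dominated convergence.

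First I would translate membership into explicit inequalities. By definition $\bfz\in W-\bfa t$ exactly when $\bfz+\bfa t\in W$, i.e. $z_i+a_it<z_{i+1}+a_{i+1}t$ for $i=1,\dots,n-1$. Writing $\delta_i:=a_{i+1}-a_i$, which is strictly positive since $\bfa\in W$, this is the decoupled system
$$
z_{i+1}-z_i>-\delta_i t,\qquad i=1,\dots,n-1.
$$

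Second, from this representation both required properties are immediate. Since each $\delta_i>0$, the right-hand side $-\delta_i t$ is strictly decreasing in $t$; hence if $\bfz\in W-\bfa s$ and $t\ge s$ then $z_{i+1}-z_i>-\delta_i s\ge-\delta_i t$ for every $i$, so $W-\bfa s\subseteq W-\bfa t$ and the family is nested increasing. Moreover, for a fixed $\bfz$ I would choose $T$ so large that $-\delta_i T<z_{i+1}-z_i$ for every $i$ (possible because $-\delta_i t\to-\infty$ for each $i$); then $\bfz\in W-\bfa t$ for all $t\ge T$. As $\bfz$ was arbitrary, $\bigcup_{t>0}(W-\bfa t)=\bbR^n$, and combined with monotonicity this yields $W-\bfa t\uparrow\bbR^n$, which is the claim.

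The argument is elementary and I do not anticipate a genuine obstacle; the only point that needs care is fixing the meaning of the limit of sets, since $W-\bfa t$ is an unbounded region and the symbol $\to\bbR^n$ must be read as increasing convergence, i.e. pointwise convergence of indicators. Making explicit that the coordinatewise constraints split into $n-1$ independent gap inequalities, each loosened at the linear rate $\delta_i>0$ granted by $\bfa\in W$, is precisely what renders both the monotonicity and the exhaustion transparent.
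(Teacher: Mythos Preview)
Your proof is correct and follows essentially the same approach as the paper: both translate membership $\bfz\in W-\bfa t$ into the $n-1$ gap inequalities $z_{i+1}-z_i>-(a_{i+1}-a_i)t$ and then observe that, since each $a_{i+1}-a_i>0$, any fixed $\bfz$ eventually satisfies all of them. Your version adds an explicit statement and proof of monotonicity in $t$ and a clarification of what the set convergence means, which the paper leaves implicit.
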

\begin{proof}
Let $\bfa \in W$. We show that for all   ${\bfy \in \bbR^n}$ there exists ${s > 0}$, such that for
all ${t > s}$,  $\bfy \in \{W - \bfa t \}$.
Let $y \in \bbR^n$. We note  $b_i = y_{i+1} - y_i$ and $d_{i}
= a_{i+1} - a_{i}$. Condition $\bfa\in W$ implies $d_{i} > 0$ for all $i=1,2, \dots,n-1$.
We take  $s={\max\{-b_i,0\}}/{\min\{d_i\}}$ and $t>s$.
Set $z_i = y_i + t a_i $, then we get that $\bfz \in W$, because
$$z_{i+1} - z_i = y_{i+1} + t a_{i+1} - y_i - t a_i = b_i +t d_i > b_i + s d_i \geq b_i + \max\{-b_i,0\} \geq 0.$$
Thus for $t>s$ we have $ \bfy = \bfz - t \bfa $,
where $\bfz\in W$, and so $\bfy\in \{W-ta\}$ for all  $t>s$.
\halmos
\end{proof}

\medskip\noindent
{\bf Example 3} $(a_1 < a_2 < \dots < a_n)$ 
 This is the case of non-colliding drifts.
Here $q = q' = n$, $m_1 = m'_{1}= 1, \dots , m_n = m'_{n} = n$,
$f_{m_1}= a_1, \dots, f_{m_n} = a_n$.
Using Proposition \ref{GBM.MainProp} we have
\begin{eqnarray*}
\Prob_{\bfx}(\tau>t) &=&
(2\pi)^{-n/2}
e^{-<\bfx,\bfa>}
\det \left[e^{x_k a_j}\right]
\int_{W-\bfa\sqrt{t}}
e^{-\frac{1}{2}|\bfz|^2} \ud \bfz \ (1 + o(1)).
\end{eqnarray*}
\\
By Lemma \ref{lemmaW->Rn} we have that
\begin{eqnarray*}
\lim_{t \rightarrow \infty}
\int_{W-\bfa\sqrt{t}}
e^{-\frac{1}{2}|\bfz|^2} \ud \bfz =
\int_{\bbR^n}
e^{-\frac{1}{2}|\bfz|^2} \ud \bfz = (2\pi)^{n/2}.
\end{eqnarray*}
\\
Finally we write
$$
\lim_{t \rightarrow \infty} \Prob_{\bfx}(\tau>t) =
e^{-<\bfx,\bfa>} \det \left[e^{x_k a_j}\right].
$$
This result was derived earlier by Biane {\it et al} \cite{bianeetal}

\medskip\noindent
{\bf Example 4} Case when $q=q'=1$.  This is the case of a one irreducible drift vector.
Here $m_1 = m'_{1}= n$,
$f_1 = f_2 \dots =  f_n = \bar{a}_{[1;n]} = \frac{a_1 + \dots + a_n}{n}$.
Using Proposition \ref{GBM.MainProp} we have
\begin{eqnarray*}
\Prob_{\bfx}(\tau>t)
&=&
C h(\bfx) t^{- \alpha} e^{-\gamma t} (1 + o(1)),
\end{eqnarray*}
where
\begin{eqnarray*}
\gamma = \frac{1}{2}  \left( \frac{1}{n} \sum_{0<u<v\leq n} (a_u - a_v)^2 \right),
\end{eqnarray*}

\begin{eqnarray*}
\alpha = \frac{(n-1)(n+1)}{2},
\end{eqnarray*}
\begin{eqnarray*}
h(\bfx)&=&
e^{-<\bfx,\bfa>} \det\left[ e^{x_i f_j} x_i^{j-1} \right] ,
\end{eqnarray*}
\begin{eqnarray*}
C &=&
(2\pi)^{-n/2} \sqrt{2\pi n}c_n
  \\
&&\times
\idotsint\limits_{\xi_i>0 : i = 1,2, \dots , n-1 }
e^{-\frac{1}{2}\sum_{l=1}^q \left( \frac{2}{\nu_l} \sum_{m_{l-1}<u<v\leq m_l} (\xi_u + \dots + \xi_{v-1})(a_u - a_v) \right)}
\nonumber \\
&&\hspace{7cm}\times  H(\bfxi_{[1;n-1]})
\prod_{ i = 1}^{n-1}\ud \xi_i\;.
\nonumber
\end{eqnarray*}

\bigskip
We now analyze a remaining situation for $n = 3$.  

\medskip\noindent
{\bf Example 5}
$(a_1 > a_2$ and $\frac{a_1 + a_2}{2} < a_3)$. This is the case of two subsequences.
Thus  $q = 2, \ q' = 2$ and $m_1 = m'_{1} = 2, \ m_2 = m'_{2} = 3$.
By Theorem \ref{GBM.MainTh} we have
\begin{eqnarray*}
\Prob_{\bfx}(\tau>t)
&=&
C h(\bfx)
e^{-\frac{t}{4}(a_2 - a_1)^2 }    t^{-\frac{3}{2} },
\end{eqnarray*}
where
\begin{eqnarray*}
&\gamma & = \ \frac{(a_2 - a_1)^2}{4}, \\
&\alpha& = \ \frac{3}{2}, \\
&h(\bfx)& = \ e^{-<\bfx,\bfa>}
\left|%
\begin{array}{ccc}
  e^{x_1 \frac{a_1 + a_2}{2}} & e^{x_1 \frac{a_1 + a_2}{2}} x_1 & e^{x_1 a_3} \\
  e^{x_2 \frac{a_1 + a_2}{2}} & e^{x_2 \frac{a_1 + a_2}{2}} x_2 & e^{x_2 a_3} \\
  e^{x_3 \frac{a_1 + a_2}{2}} & e^{x_3 \frac{a_1 + a_2}{2}} x_3 & e^{x_3 a_3} \\
\end{array}%
\right|
,\\
&C& = \  (2\pi)^{-3/2} \sqrt{2\pi 3}  \frac{1}{(a_1 - a_2)^2} \sqrt{3 \pi}.
\end{eqnarray*}

\section{Auxiliary results.}\label{s.proof}
For the proof we need a set of lemmas and propositions, presented in subsections below.

\subsection{Useful lemmas.}
We need a few technical lemmas, which we state without proofs.
\begin{Lemma}\label{lemma.y^2_cz1}
For $\bfa \in \bbR^m$
$$
\sum_{i=1}^m \left(\bar{a}_{[1;m]} - a_i \right)^2 = \frac{1}{m} \sum_{1\leq u<v \leq m} (a_u - a_v)^2.
$$
\end{Lemma}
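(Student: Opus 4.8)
The plan is to verify this classical variance identity by expanding both sides in terms of the two power sums $S_1=\sum_{i=1}^m a_i$ and $S_2=\sum_{i=1}^m a_i^2$, and checking that each side separately equals $S_2 - S_1^2/m$. No analytic input is needed; the statement is a purely algebraic identity, so the whole argument is a short computation.

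First I would treat the left-hand side. Writing $\bar a_{[1;m]}=S_1/m$ and expanding the square gives $\sum_{i=1}^m(\bar a_{[1;m]}-a_i)^2 = m\,\bar a_{[1;m]}^2 - 2\bar a_{[1;m]}\sum_{i=1}^m a_i + S_2$. Since $\sum_{i=1}^m a_i = m\,\bar a_{[1;m]} = S_1$, the first two terms collapse to $-m\,\bar a_{[1;m]}^2 = -S_1^2/m$, so the left-hand side equals $S_2 - S_1^2/m$.

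Next I would handle the right-hand side. The key observation is that the strictly-ordered sum can be symmetrized: $\sum_{1\le u<v\le m}(a_u-a_v)^2 = \tfrac12\sum_{u,v=1}^m(a_u-a_v)^2$, the diagonal terms $u=v$ contributing nothing. Expanding $(a_u-a_v)^2=a_u^2-2a_ua_v+a_v^2$ and summing over the full index square gives $2mS_2 - 2S_1^2$, so the halved double sum equals $mS_2 - S_1^2$. Dividing by $m$ yields $S_2 - S_1^2/m$, matching the left-hand side, and the identity follows.

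Since both sides reduce to the same expression, the proof is complete. The only place requiring care — and the closest thing to an obstacle in an otherwise routine manipulation — is the combinatorial bookkeeping in the symmetrization step: correctly accounting for the factor $\tfrac12$ relating the strictly-ordered sum to the full double sum, and tracking the factor $m$ that appears when one of the two indices is summed out against $S_2$ or $S_1$.
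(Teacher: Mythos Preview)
Your argument is correct: both sides reduce to $S_2 - S_1^2/m$, and the bookkeeping in the symmetrization step is handled properly. The paper itself states this lemma without proof (it is listed among ``a few technical lemmas, which we state without proofs''), so there is nothing to compare against; your short power-sum computation is exactly the standard verification one would expect.
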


\begin{Lemma}\label{lemma.y^2_cz2}
For $\bfa , \bfz \in \bbR^m$
$$
\sum_{i=1}^m  z_i \left( \bar{a}_{[1;m]} - a_i \right) = \frac{2}{m} \sum_{u<v} (z_v-z_u)(a_u - a_v).
$$
\end{Lemma}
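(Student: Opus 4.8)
The plan is to prove the identity by reducing both sides to the three elementary scalars $Z=\sum_{i=1}^m z_i$, $A=\sum_{i=1}^m a_i$ and $P=\sum_{i=1}^m z_i a_i$. Both sides are bilinear in $\bfz$ and $\bfa$, so it suffices to show that each collapses to the same combination of $Z$, $A$ and $P$.

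First I would expand the left-hand side. Since $\bar{a}_{[1;m]}=A/m$, one has $\sum_{i=1}^m z_i(\bar{a}_{[1;m]}-a_i)=(A/m)\sum_i z_i-\sum_i z_i a_i=ZA/m-P$, with no further manipulation needed.

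For the right-hand side the key observation is that the summand $(z_v-z_u)(a_u-a_v)$ is invariant under the interchange $u\leftrightarrow v$, because both factors change sign. Hence I would rewrite $\sum_{u<v}(z_v-z_u)(a_u-a_v)=\tfrac12\sum_{u,v}(z_v-z_u)(a_u-a_v)$, the diagonal terms being zero, and then expand the unrestricted double sum into its four pieces. Each piece factors into a product of single sums, e.g.\ $\sum_{u,v}z_v a_u=ZA$ while $\sum_{u,v}z_v a_v=mP$, so the double sum equals $2(ZA-mP)$ and $\sum_{u<v}(z_v-z_u)(a_u-a_v)=ZA-mP$.

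Comparing the two computations is then immediate, and the only real point of care is the normalising constant. The reduction gives $\sum_i z_i(\bar{a}_{[1;m]}-a_i)=\tfrac1m\sum_{u<v}(z_v-z_u)(a_u-a_v)$, i.e.\ prefactor $1/m$. I would pin this down independently by the two-point check $m=2$, where the left side is $\tfrac12(z_1-z_2)(a_2-a_1)$ and the single right-hand summand is $(z_2-z_1)(a_1-a_2)=(z_1-z_2)(a_2-a_1)$, and by specialising $\bfz=\bfa$, in which case the identity must agree (up to the obvious overall sign) with Lemma \ref{lemma.y^2_cz1}, whose prefactor is likewise $1/m$. Tracking this constant consistently is what I expect to be the genuinely load-bearing step, the rest being routine expansion.
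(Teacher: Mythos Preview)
Your argument is correct, and in fact the paper does not supply its own proof of this lemma (it groups Lemmas \ref{lemma.y^2_cz1} and \ref{lemma.y^2_cz2} under ``technical lemmas, which we state without proofs''), so there is nothing to compare against. Your reduction of both sides to the scalars $Z$, $A$, $P$ is the natural and complete route.

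You are also right about the constant: the factor in the statement should be $1/m$, not $2/m$. Your $m=2$ check and the specialisation $\bfz=\bfa$ (which must reproduce Lemma \ref{lemma.y^2_cz1} up to sign) both confirm this. Independently, the way the lemma is used in the paper is consistent with $1/m$: in Lemma \ref{lemma.y^2} the cross term of $|\bff\sqrt{t}-\bfa\sqrt{t}+\bfz|^2$ within the $l$-th block is $2\sqrt{t}\sum_i z_i(\bar a-a_i)$, and the formula displayed there carries $\tfrac{2\sqrt{t}}{\nu_l}$, which matches $2\sqrt{t}\cdot\tfrac{1}{\nu_l}$ rather than $2\sqrt{t}\cdot\tfrac{2}{\nu_l}$. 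So the $2/m$ in the lemma statement is a typo that does not propagate.
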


The proof of the following lemma follows easily from Lemmas \ref{lemma.y^2_cz1} and \ref{lemma.y^2_cz2}.
\begin{Lemma}\label{lemma.y^2}
For $\bfa, \bff \in R^n$ such that $\bff$ is is a vector obtained from the stable partition of $\bfa$, and $\bfz \in R^n$, we have
\begin{eqnarray*}
|\bff\sqrt{t} - \bfa\sqrt{t} + \bfz|^2 = |\bfz|^2 + \sum_{l=1}^q  &\bigg(& \frac{t}{\nu_l} \sum_{m_{l-1}<u<v\leq m_l} (a_u - a_v)^2  \\
&+&
\frac{2\sqrt{t}}{\nu_l} \sum_{m_{l-1}<u<v\leq m_l} (z_v-z_u)(a_u - a_v) {\bigg)}.
\end{eqnarray*}

\end{Lemma}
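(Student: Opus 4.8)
The plan is to expand the squared norm coordinatewise and then exploit the block structure of $\bff$. First I would write
$$
|\bff\sqrt{t}-\bfa\sqrt{t}+\bfz|^2=\sum_{i=1}^n\big((f_i-a_i)\sqrt{t}+z_i\big)^2
$$
and expand each summand, which produces the three pieces
$$
|\bff\sqrt{t}-\bfa\sqrt{t}+\bfz|^2 = t\sum_{i=1}^n (f_i-a_i)^2 + 2\sqrt{t}\sum_{i=1}^n (f_i-a_i)z_i + |\bfz|^2 .
$$
The last piece already matches the $|\bfz|^2$ term in the claim, so it remains only to identify the coefficients of $t$ and of $2\sqrt{t}$ with the two block sums appearing on the right-hand side of the lemma.

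The key observation is that, by the construction of $\bff$, the coordinate $f_i$ is constant on each block of the stable partition, namely $f_i=\bar{a}_{(m_{l-1};m_l]}$ whenever $m_{l-1}<i\le m_l$. Hence $f_i-a_i=\bar{a}_{(m_{l-1};m_l]}-a_i$ on block $l$, and both remaining sums split over the $q$ blocks:
$$
\sum_{i=1}^n (f_i-a_i)^2=\sum_{l=1}^q\sum_{m_{l-1}<i\le m_l}\big(\bar{a}_{(m_{l-1};m_l]}-a_i\big)^2,
$$
and likewise $\sum_{i=1}^n (f_i-a_i)z_i=\sum_{l=1}^q\sum_{m_{l-1}<i\le m_l} z_i\big(\bar{a}_{(m_{l-1};m_l]}-a_i\big)$.

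Next I would apply the two one-block identities to each block separately. Applying Lemma \ref{lemma.y^2_cz1} to the sub-vector $\bfa_{(m_{l-1};m_l]}$ of length $\nu_l$, whose mean is exactly $\bar{a}_{(m_{l-1};m_l]}$, turns the inner sum in the first display into $\frac{1}{\nu_l}\sum_{m_{l-1}<u<v\le m_l}(a_u-a_v)^2$; multiplying by $t$ and summing over $l$ reproduces the first term of the claim. Applying Lemma \ref{lemma.y^2_cz2} to the pair $\bfz_{(m_{l-1};m_l]},\bfa_{(m_{l-1};m_l]}$ converts the cross sum on block $l$ into $\frac{1}{\nu_l}\sum_{m_{l-1}<u<v\le m_l}(z_v-z_u)(a_u-a_v)$; multiplying by $2\sqrt{t}$ and summing over $l$ yields the second term. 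Collecting the three contributions gives the asserted identity.

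There is essentially no analytic difficulty here: the entire content is the blockwise reduction, and the only points demanding care are verifying that the overall mean of each restricted sub-vector coincides with the block value $f^l$ (so that the two single-vector lemmas genuinely apply inside each block) and keeping track of the numerical constants, in particular matching the coefficient $2\sqrt{t}/\nu_l$ of the cross term against the normalization in Lemma \ref{lemma.y^2_cz2}.
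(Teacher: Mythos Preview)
Your proposal is correct and follows exactly the approach the paper intends: the paper's own ``proof'' of Lemma~\ref{lemma.y^2} is just the sentence that it follows easily from Lemmas~\ref{lemma.y^2_cz1} and~\ref{lemma.y^2_cz2}, and you have spelled out precisely that blockwise reduction. Your closing caveat about matching the normalization in Lemma~\ref{lemma.y^2_cz2} is apt, since the factor there (as printed) is off by~$2$; with the correct factor $1/m$ everything lines up exactly as you wrote.
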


\begin{Lemma}\label{MatrixIdent} 
$$
\bfA = \left(%
\begin{array}{cccccc}
  -1 & 1 & 0 & \dots& 0 & 0 \\
  0 & -1 & 1 & \dots& 0 & 0 \\
  \vdots & \vdots & \vdots & \ddots & \vdots &\vdots\\
  0 & 0 & 0 &\dots & -1 & 1 \\
  1 & 1 & 1 & \dots & 1 &1\\
\end{array}%
\right)\;,
$$
then
$$
(\bfA^{-1})^T \bfA^{-1} = \frac{1}{n}
 \left(%
\begin{array}{cccccc}
  n-1 & &  & & &  \\
  n-2 & 2(n-2) & & & & \\
  n-3 & 2(n-3) &3(n-3) & & & \\
  \vdots & \vdots & \vdots & \ddots &  &\\
  1 & 2 & 3 &\dots & n-1 &  \\
  0 & 0 & 0 & \dots & 0 &1\\
\end{array}%
\right)\;.
$$
Note that $(\bfA^{-1})^T \bfA^{-1}$ is symmetric.
\end{Lemma}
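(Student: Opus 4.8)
The plan is to verify the identity by a direct computation, splitting it into two steps: first compute $\bfA^{-1}$ explicitly, then form the product $(\bfA^{-1})^T\bfA^{-1}$ and check it entrywise against the claimed matrix. The matrix $\bfA$ has a transparent structure: its first $n-1$ rows encode the difference operator $(\bfA\bfx)_i = x_{i+1}-x_i$ for $i=1,\ldots,n-1$, while the last row sums all coordinates, $(\bfA\bfx)_n = \sum_{i=1}^n x_i$. To find $\bfA^{-1}$ I would solve $\bfA\bfx = \bfe_k$ for each standard basis vector $\bfe_k$, i.e.\ find the column $\bfx$ of $\bfA^{-1}$ satisfying $x_{i+1}-x_i = \delta_{ik}$ for $i<n$ and $\sum_i x_i = \delta_{nk}$. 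The difference constraints force $\bfx$ to be constant on the blocks $\{1,\ldots,k\}$ and $\{k+1,\ldots,n\}$ with a unit jump between them, and the sum constraint then pins down the common additive shift. This should yield a clean closed form: the $(i,k)$ entry of $\bfA^{-1}$ equals $-k/n$ when $i\le k\le n-1$ and $(n-k)/n$ when $k<i$ (for $k\le n-1$), with the last column $k=n$ being the all-ones/$n$ vector.

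Next I would assemble the Gram matrix $G = (\bfA^{-1})^T\bfA^{-1}$, whose $(k,l)$ entry is the inner product of the $k$th and $l$th columns of $\bfA^{-1}$. Using the block-constant description of the columns, each such inner product is a sum of two elementary pieces (one over each block), so the entry $S_{kl}$ becomes an explicit product of linear terms in $k,l,n$. I expect that for $k\le l\le n-1$ the computation collapses to $S_{kl} = k(n-l)/n$, matching the displayed lower-triangular pattern (note that the $(k,l)$ entry shown, read as $k\cdot(n-l)$ scaled by $1/n$, is exactly this), with symmetry $S_{kl}=S_{lk}$ handling the upper triangle and the final row/column $k=n$ or $l=n$ giving the isolated $1/n$ in the corner and zeros elsewhere.

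The only real bookkeeping obstacle is keeping the two block-summation ranges straight when $k\ne l$, since the jump locations of the $k$th and $l$th columns differ and one must correctly intersect the block on which each column is constant; a clean way to avoid case proliferation is to write each column as $\bfA^{-1}\bfe_k = -\tfrac{k}{n}\bfone + \mathbf{1}_{\{i>k\}}$ for $k<n$ and compute inner products of such vectors directly, so that the cross terms with $\bfone$ cancel in a controlled fashion. Once the generic $(k,l)$ formula is in hand, symmetry and the boundary column $k=n$ are immediate, completing the verification against the stated matrix. I do not expect any conceptual difficulty beyond this indexing care, and the symmetry of $G$ is automatic since it is a Gram matrix.
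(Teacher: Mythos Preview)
The paper does not actually prove this lemma: it is one of the ``technical lemmas, which we state without proofs'' at the beginning of Section~\ref{s.proof}. Your direct-computation strategy---invert $\bfA$ explicitly, then form the Gram matrix column by column---is the natural approach and works cleanly.

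There is, however, a slip in your stated formula for the columns of $\bfA^{-1}$: the two block values are interchanged. Solving $\bfA\bfx=\bfe_k$ for $k<n$ gives $x_i$ constant on $\{1,\ldots,k\}$ and on $\{k+1,\ldots,n\}$ with a unit jump, and the zero-sum condition forces
\[
(\bfA^{-1})_{ik}=\begin{cases} -(n-k)/n & i\le k,\\[2pt] k/n & i>k,\end{cases}
\qquad (k\le n-1),
\]
i.e.\ $\bfA^{-1}\bfe_k=-\tfrac{n-k}{n}\,\bfone+\mathbf{1}_{\{i>k\}}$, not $-\tfrac{k}{n}\,\bfone+\mathbf{1}_{\{i>k\}}$ as you wrote. (For $n=3$, $k=1$ your formula gives column $(-1/3,2/3,2/3)$, whose entries sum to $1$ rather than $0$.) With the corrected columns the inner product for $k\le l\le n-1$ telescopes exactly as you anticipated,
\[
G_{kl}=\frac{k(n-l)}{n^2}\bigl[(n-k)-(l-k)+l\bigr]=\frac{k(n-l)}{n},
\]
matching the displayed matrix, and the last column $\bfA^{-1}\bfe_n=\tfrac{1}{n}\bfone$ is orthogonal to all earlier columns, giving the isolated $1/n$ in the corner. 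So your plan is sound; just fix that one swap before carrying out the arithmetic.
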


\bigskip

By Proposition \ref{GBM.Prop.assym.po.zamianie} we have
\begin{eqnarray*}
\Prob_{\bfx}(\tau>t)
&=& (2\pi)^{-n/2}
e^{-<\bfa,\bfx> -||\bfx||^2/2t}
\int_W e^{-||\bfy - \bfa \sqrt{t}||^2 /2}
\det(e^{x_iy_j/\sqrt{t}})\ud\bfy \;.
\end{eqnarray*}
We now introduce new variable $\bfz$ by
$$
\bfy = \bff \sqrt{t} + \bfz \label{podstawienie},
$$
where $\bff = (f_1, \dots , f_n)$ is a vector obtained from the stable partition of $\bfa$.

Finally we rewrite formula \refs{GBM.Prop.assym.po.zamianie.wzor} in new variables by the use of Lemma \ref{lemma.y^2}:
\begin{Lemma}
\begin{eqnarray} \label{GBM.Wzor.Po.Podstawieniu}
\lefteqn{\Prob_{\bfx}(\tau>t)=(2\pi)^{-n/2}
e^{-<\bfa,\bfx> -||\bfx||^2/2t}e^{-\gamma t}} \nonumber\\[.5cm]
&&\int_{W-\bff\sqrt{t}}
e^{-\frac{|\bfz|^2}{2} }
 e^{-\frac{1}{2} \sum_{l=1}^q \frac{2\sqrt{t}}{\nu_l} \sum_{m_{l-1}<u<v\leq m_l} (z_v-z_u)(a_u - a_v)  }
\det\left[e^{x_i(z_j/\sqrt{t} + f_j) }\right]
\ud \bfz.
\end{eqnarray}
\end{Lemma}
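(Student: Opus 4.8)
The plan is to obtain \refs{GBM.Wzor.Po.Podstawieniu} as a pure change of variables inside the integral of Proposition \ref{GBM.Prop.assym.po.zamianie}; no new probabilistic input is required. First I would take the representation
$$
\Prob_{\bfx}(\tau>t) = (2\pi)^{-n/2} e^{-<\bfa,\bfx>-||\bfx||^2/2t}\int_W e^{-||\bfy-\bfa\sqrt{t}||^2/2}\det[e^{x_iy_j/\sqrt{t}}]\ud\bfy
$$
and substitute $\bfy=\bff\sqrt{t}+\bfz$. Because this map is a translation, its Jacobian is $1$, so $\ud\bfy=\ud\bfz$, while the image of the Weyl chamber $W$ is by definition $W-\bff\sqrt{t}$; this fixes the new domain of integration.

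The central step is to re-express the Gaussian weight. Under the substitution the exponent reads $||\bfy-\bfa\sqrt{t}||^2=|\bff\sqrt{t}-\bfa\sqrt{t}+\bfz|^2$, which is exactly the quantity decomposed in Lemma \ref{lemma.y^2}. That lemma splits it into $|\bfz|^2$, a term linear in $t$ equal to $\sum_{l=1}^q\frac{t}{\nu_l}\sum_{m_{l-1}<u<v\leq m_l}(a_u-a_v)^2$, and the cross term $\sum_{l=1}^q\frac{2\sqrt{t}}{\nu_l}\sum_{m_{l-1}<u<v\leq m_l}(z_v-z_u)(a_u-a_v)$. Comparing the linear-in-$t$ piece with definition \refs{GBM.gamma}, I would identify it as $2\gamma t$, so after the overall factor $-\tfrac12$ the contribution is exactly $-\gamma t$; since $e^{-\gamma t}$ is independent of $\bfz$ it may be pulled out in front of the integral.

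It then remains to rewrite the determinant. Using $y_j/\sqrt{t}=f_j+z_j/\sqrt{t}$ turns each entry $e^{x_iy_j/\sqrt{t}}$ into $e^{x_i(z_j/\sqrt{t}+f_j)}$, so the determinant becomes $\det[e^{x_i(z_j/\sqrt{t}+f_j)}]$. Collecting the unchanged prefactor $(2\pi)^{-n/2}e^{-<\bfa,\bfx>-||\bfx||^2/2t}$, the extracted $e^{-\gamma t}$, the surviving Gaussian $e^{-|\bfz|^2/2}$, the cross-term exponential and this determinant reproduces \refs{GBM.Wzor.Po.Podstawieniu} verbatim. I do not expect a genuine obstacle: all the analytic content is absorbed into Lemma \ref{lemma.y^2} (which in turn rests on Lemmas \ref{lemma.y^2_cz1} and \ref{lemma.y^2_cz2}), and the only places demanding attention are bookkeeping ones — confirming the factor of $2$ that converts the constant term into $\gamma$, and tracking the sign in the cross term $(z_v-z_u)(a_u-a_v)$ so that it matches the exponent displayed in the statement.
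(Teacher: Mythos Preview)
Your proposal is correct and matches the paper's own argument essentially verbatim: the paper also derives \refs{GBM.Wzor.Po.Podstawieniu} simply by substituting $\bfy=\bff\sqrt{t}+\bfz$ into formula \refs{GBM.Prop.assym.po.zamianie.wzor} and invoking Lemma \ref{lemma.y^2} to expand $|\bff\sqrt{t}-\bfa\sqrt{t}+\bfz|^2$. The bookkeeping checks you flag (identifying the $t$-linear piece with $2\gamma t$ via \refs{GBM.gamma} and tracking the sign in the cross term) are indeed the only points requiring care.
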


\subsection{Asymptotic behavior of determinant. }

The following lemma is an extension of Lemma 2 from Pucha\l a \cite{puchala}\,.

 We define functions
$$
g_{\bfk}(\bfz) = \frac{\det[z_i^{k_j}]}{\det[z_i ^{j-1}]},
$$
 for $\bfk = (k_1 , \dots , k_n) \in \bbZ^n$ and $0 \leq k_1 < \dots < k_n$
Functions $g$ corresponds to 
Schur functions $g_{\bfk} = s_{\bfk - (0,1, \dots , n)}$; see e.g. Macdonald \cite{macdonald}, Ch. 1.3.
\begin{Lemma}\label{GBM.DetLemma}
Let $k_0=\sum_{j=1}^{q'} \binom{\nu'_j}{2}$
\begin{eqnarray} \label{GBM.DetExpansion}
\det\left[e^{x_i (z_j/\sqrt{t} + f_j)} \right] &=&
\sum_{k = k_0}^{\infty }
t^{-k/2} T_k \; \; ,
\end{eqnarray}
where
\begin{eqnarray*}
T_k &=&
\prod_{j=1}^{q'}\Delta \left(\bfz_{(m'_{j-1},m'_l]} \right)\\
&&\times
\sum_{{k_1 + \dots + k_n = k  \atop k_1< \dots <k_{m'_1};}
\atop{{\vdots \dots \vdots}\atop{k_{m'_{q'-1}+1}} <\dots < k_{m'_{q'}}}}
\frac{g_{\bfk_{(m'_0, m'_1]}}(\bfz_{(m'_0, m'_1]}) }{k_1 ! \dots k_{m'_1} !}
\dots
\frac{g_{\bfk_{(m'_{q'-1}, m'_{q'}]}}(\bfz_{(m'_{q' -1}, m'_{q'}]})} {k_{m'_{q'-1}+1} ! \dots k_{m'_{q'}} !}
\det\left[ e^{x_i f_j} x_i^{k_j}  \right]\;.
\end{eqnarray*}
In particular
as $t \rightarrow \infty$
\begin{eqnarray*}
\lefteqn{
\det\left[e^{x_i (z_j/\sqrt{t} + f_j)} \right] =}\\
&=& t^{-\frac{1}{2}\sum_{j=1}^{q'} \binom{\nu'_j}{2}}
\prod_{j=1}^{q'}c_{\nu'_j}\Delta(\bfz_{(m'_{j-1},m'_l]})\\
&\times&
\det \left[e^{x_k f_j} x_k^{\sum_l^{q'} (j-m_{i_{l-1}} - 1)  \ind_{\{m_{i_{l-1}} < j \leq m_{i_l}\}}} \right] (1 + o(1)).
\end{eqnarray*}
\end{Lemma}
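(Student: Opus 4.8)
The plan is to expand each matrix entry as a convergent power series in $t^{-1/2}$ and exploit the multilinearity of the determinant together with the block structure of $\bff$ coming from the stable partition. First I would factor each entry as $e^{x_i(z_j/\sqrt{t}+f_j)} = e^{x_i f_j}\,e^{x_i z_j/\sqrt{t}}$ and expand the second factor, $e^{x_i z_j/\sqrt{t}} = \sum_{k\ge 0} x_i^k z_j^k/(k!\,t^{k/2})$. Since the determinant is linear in each column and the $j$-th column carries the variable $z_j$, expanding column by column yields, for each fixed $\bfz$, the convergent expansion
\[
\det\left[e^{x_i(z_j/\sqrt{t}+f_j)}\right] = \sum_{k_1,\ldots,k_n\ge 0}\ \prod_{j=1}^n \frac{z_j^{k_j}}{k_j!}\ t^{-(k_1+\cdots+k_n)/2}\ \det\left[e^{x_i f_j}x_i^{k_j}\right].
\]
Collecting terms with a fixed total degree $k=k_1+\cdots+k_n$ isolates the power $t^{-k/2}$ and identifies its coefficient, which I must show equals $T_k$.

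The key structural step is the following. By the stable partition, $\bff$ is constant on each block $(m'_{l-1},m'_l]$ and strictly increasing across blocks; hence two columns $j\neq j'$ of $[e^{x_i f_j}x_i^{k_j}]$ lying in the same block with $k_j=k_{j'}$ are identical, so the determinant vanishes unless the exponents $k_j$ are pairwise distinct \emph{within every block}. I would then sort the exponents block by block: writing the exponents assigned to a given block as a permutation of an increasing tuple and using antisymmetry of the determinant under column permutations, the sum over within-block permutations of the $z$-monomials collapses to a single block determinant $\det[z_i^{k_j}]$. Multiplying and dividing by $\det[z_i^{j-1}]=\Delta(\bfz_{(m'_{l-1},m'_l]})$ turns this into the Schur factor $g_{\bfk}(\bfz)$ times the Vandermonde, reproducing exactly the stated form of $T_k$, namely a global factor $\prod_{j=1}^{q'}\Delta(\bfz_{(m'_{j-1},m'_j]})$ times a sum over within-block increasing exponents of the product of Schur functions over factorials times the full determinant $\det[e^{x_i f_j}x_i^{k_j}]$.

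For the starting index and the leading term, I would note that $\nu'_l$ distinct nonnegative integers have sum at least $0+1+\cdots+(\nu'_l-1)=\binom{\nu'_l}{2}$, so $k\ge \sum_{l=1}^{q'}\binom{\nu'_l}{2}=k_0$ and the expansion begins at $k_0$. At $k=k_0$ the only surviving configuration assigns the exponents $0,1,\ldots,\nu'_l-1$ to each block; there the Schur factor $g_{(0,1,\ldots,\nu'_l-1)}$ equals $1$, the factorial product equals $0!\,1!\cdots(\nu'_l-1)!=1/c_{\nu'_l}$, and the exponent on $x_i$ in column $j$ of that block becomes $j-m'_{l-1}-1$. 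Using $m'_l=m_{i_l}$, this reproduces the determinant in the ``in particular'' display with prefactor $t^{-k_0/2}\prod_{j=1}^{q'}c_{\nu'_j}\prod_{j=1}^{q'}\Delta(\bfz_{(m'_{j-1},m'_j]})$, and the higher-degree terms, being of order $t^{-(k_0+1)/2}$ for each fixed $\bfz$, contribute the factor $(1+o(1))$.

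The main obstacle I expect is the careful bookkeeping of the block-wise antisymmetrization: one must symmetrize only the $z$-monomials inside each block while leaving the full $n\times n$ determinant $\det[e^{x_i f_j}x_i^{k_j}]$ intact, and verify that permuting columns inside one block does not interfere with the others. The clean way to see this is that distinct blocks carry strictly different values of $f_j$, so exponents may legitimately repeat \emph{across} blocks without annihilating the determinant, and only within-block coincidences matter; this is precisely what makes the $z$-part factor into one Schur function per block while the $x$-determinant remains a single global determinant.
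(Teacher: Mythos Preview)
Your proposal is correct and follows essentially the same approach as the paper. The only cosmetic difference is that you reach the intermediate identity $\det[e^{x_i(z_j/\sqrt{t}+f_j)}]=\sum_{\bfk\ge 0}\bfz^{\bfk}(k_1!\cdots k_n!)^{-1}t^{-|\bfk|/2}\det[e^{x_if_j}x_i^{k_j}]$ by column-wise multilinearity, whereas the paper gets there via the Leibniz sum and a multinomial expansion of $(\sum x_iz_{\sigma(i)})^k$; from that point on the block-wise vanishing argument, the antisymmetrization producing the Schur factors, and the leading-term identification are identical.
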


\proof
By $S_n$ we denote the group of permutations on $n$-set. We write
\begin{eqnarray*}
\lefteqn{
    \det\left[e^{x_i (z_j/\sqrt{t} + f_j)} \right] =}\\
&=&
\sum_{\sigma \in S_n} (-1)^{\sigma} e^{\sum x_i f_{\sigma(i)}} e^{\sum x_i z_{\sigma(i)}/ \sqrt{t}} \\
&=&
\sum_{\sigma \in S_n} (-1)^{\sigma} e^{\sum x_i f_{\sigma(i)}}\sum_{k=0}^{\infty} t^{-k/2}({ x_1 z_{\sigma(1)} + \dots + x_n z_{\sigma (n)} })^k/k! \\
&=&
\sum_{k=0}^{\infty} \frac{t^{-k/2}}{k!}\sum_{\sigma \in S_n} (-1)^{\sigma} e^{\sum x_i f_{\sigma(i)}}({ x_1 z_{\sigma(1)} + \dots + x_n z_{\sigma (n)} })^k .
\end{eqnarray*}
Now the coefficient at $t^{-k/2}$ is equal to
\begin{eqnarray*}
T_k &=& T_k(\bfz)= \frac{1}{k!} \sum_{\sigma \in S_n} (-1)^{\sigma} e^{\sum x_i f_{\sigma(i)}}
({ x_1 z_{\sigma(1)} + \dots + x_n z_{\sigma (n)} })^k \\
&=&
\frac{1}{k!}\sum_{\sigma \in S_n} (-1)^{\sigma} e^{\sum x_i f_{\sigma(i)}}
\sum_{k_1 + \dots + k_n = k} \frac{k!}{k_1 ! \dots k_n!}
(x_1 z_{\sigma(1)})^{k_{\sigma(1)}} \dots (x_n z_{\sigma(n)})^{k_{\sigma(n)}} \\
&=&
\sum_{k_1 + \dots + k_n = k} \frac{1}{k_1 ! \dots k_n!} \sum_{\sigma \in S_n} (-1)^{\sigma}
e^{\sum x_i f_{\sigma(i)}}(x_1 z_{\sigma(1)})^{k_{\sigma(1)}} \dots (x_n z_{\sigma(n)})^{k_{\sigma(n)}} \\
&=&
\sum_{k_1 + \dots + k_n = k} \frac{\bfz^{\bfk} }{k_1 ! \dots k_n!}  \det[e^{x_i f_j} x_i ^{k_j}] .
\end{eqnarray*}
Recall that 
$$
f_1 = \dots = f_{m'_{1}} < f_{m'_{1} + 1} = \dots = f_{m'_{2}} < \dots < f_{m'_{q'-1}+1} = \dots =f_{m'_{{q'}}}.
$$
If $k_i = k_j$ and $f_i = f_j$, then the determinant $\det\left[e^{x_i f_j} x_i ^{k_j} \right]$ is $0$.
Thus we have non-zero determinant if  $k_i$ are different for those $i$ such that $f_i$ are equal.
Thus  index $k$ such that $T_k$ is non-zero must be at least
$$
k = \sum_{j=1}^n k_j \geq k_0=\sum_{j=1}^{q'} \binom{\nu'_j}{2},
$$
Moreover
we get all nonzero  $\det\left[e^{x_i f_j} x_i ^{k_j} \right]$
putting in each subsequence
$$(\bfk_{(m'_0,m'_1]},\ldots,\bfk_{(m'_{q'-1},m'_{q'}]})\;,$$
all possible permutations of strictly ordered numbers from $\bbZ_+$ such that all sum up to $k$.
Thus we have
\begin{eqnarray*}
T_k &=& \sum_{{k_1 + \dots + k_n = k  \atop k_1< \dots <k_{m'_1};}
\atop{{\vdots \dots \vdots}\atop{k_{m'_{q'-1}+1}} <\dots < k_{m'_{q'}}}}
\sum_{\sigma_1 \in S_{\nu'_1}} \dots \sum_{\sigma_{q'} \in S_{\nu'_{q'}}}
\frac{\bfz_{(m'_0,m'_1]}^{\sigma_1(\bfk_{(m'_0,m'_1]})}  }{k_1 ! \dots k_{m'_1} !}
\dots
\frac{\bfz_{(m'_{q' -1},m'_{q'}]}^{\sigma_1(\bfk_{(m'_{q' -1},m'_{q'}]})}  }{k_{m'_{q'-1}+1} ! \dots k_{m'_{q'}} !} \\
&&\quad \times
\det\left[ e^{x_i f_j} x_i ^{\sum_{l=1}^{q'} \sigma_l (k_j) 1_{m'_{l-1}< j \leq m'_l}}
       \right]
\end{eqnarray*}
Again we notice that permutations in the determinant influence only by the change of sign.
These signs and sums over the group of permutations form determinants, thus we have
\begin{eqnarray*}
T_k
&=& \sum_{{k_1 + \dots + k_n = k  \atop k_1< \dots <k_{m'_1};}
\atop{{\vdots \dots \vdots}\atop{k_{m'_{q'-1}+1}} <\dots < k_{m'_{q'}}}}
\frac{\det\left[\left\{z_i^{k_j} \right\}_{i,j=1}^{m'_1}\right] }{k_1 ! \dots k_{m'_1} !}
\dots
\frac{\det\left[\left\{z_i^{k_j} \right\}_{i,j=m'_{q'-1}+1}^{m'_{q'}}\right] } {k_{m'_{q'-1}+1} ! \dots k_{m'_{q'}} !}
\det\left[ e^{x_i f_j} x_i^{k_j}  \right]\;.
\end{eqnarray*}
\halmos

\medskip\noindent{\bf Remark}.
Using Itzykson--Zuber integral (see e.g. \cite{itzyksonzuber})
we can write
$$\frac{\det\left[e^{x_i (z_j/\sqrt{t} + f_j)} \right]}
{\Van(\bfx)\Van(\bfz/\sqrt{t}+\bff)}=c_n\int_{\bfU(n)}
e^{{\rm Tr}  \diag(\bfx)U\diag(\bfz/\sqrt{t}+\bff)U^{*}}\mu(\ud U)\;,
$$
where
$\mu(\ud U)$ is (normalized) Haar measure on the unitary group $\bfU(n)$.
Now letting
$t\to\infty$,
\begin{eqnarray*}
\int_{\bfU(n)} e^{{\rm Tr}
\diag(\bfx)U\diag(\bfz/\sqrt{t}+\bff)U^{*}}\mu(\ud U)\;&\to&
\int_{\bfU(n)} e^{{\rm Tr}( \diag(\bfx)U\diag(\bff)U^{*})}\mu(\ud U)\\
&=&\frac{\det\left[e^{x_i f_j} \right]} {\Van(\bfx)\Van(\bff)}
\end{eqnarray*}
and $$\Van(\bfz/\sqrt{t}+\bff)=t^{-\sum_{i=1}^{q'}{\nu'_i\choose 2}}
\prod_{i=1}^{q'}\Van(\bfz_{(m'_{i-1};m'_i]})\prod_{1\le k<l\le n} (f^l-f^k)^{\nu'_k\nu'_l}
 (1+o(1))\;.
$$
Hence, as $t\to\infty$
\begin{eqnarray*}
\lefteqn{\det\left[e^{x_i (z_j/\sqrt{t} + f_j)} \right]}\\
&&\to t^{-\sum_{i=1}^{q'}{\nu'_i\choose 2}}
c_n\prod_{i=1}^{q'}\Van(\bfz_{(m'_{i-1};m'_i]})
\prod_{1\le k<l\le n} (f^l-f^k)^{\nu'_k\nu'_l}
\frac{\det\left[e^{x_i f_j} \right]}
{\Van(\bff)}\;.
\end{eqnarray*}
This is a less detailed version of the formula from Lemma \ref{GBM.DetLemma}.

\section{Proof of the Theorem.}
Using \refs{GBM.DetExpansion} and formula \refs{GBM.Wzor.Po.Podstawieniu} we write
\begin{eqnarray} \label{GBM.caloscZSuma}
\lefteqn{\Prob_{\bfx}(\tau>t) =} \nonumber \\
&=&
(2\pi)^{-n/2}
e^{ -||\bfx||^2/2t}e^{-<\bfx,\bfa>}e^{-\gamma t}\nonumber\\
&&\times 
\sum_{k = k_0}^{\infty }
\int_{W-\bff\sqrt{t}}
e^{-\frac{1}{2}|\bfz|^2} 
e^{-\frac{1^2}{2}
\sum_{l=1}^q \left( \frac{2\sqrt{t}}{\nu_l } \sum_{m_{l-1}<u<v\leq m_l} (z_u-z_v)(a_v - a_u) \right)}
t^{-k/2} T_k(\bfz) \; \; \ud \bfz \; \; , \label{firstterm}
\end{eqnarray}
First we will analyze above expression by taking only the first term 
in the
sum \refs{firstterm}, 
and then we show that it gives the right asymptotic.
Thus
the first term equals to
\begin{eqnarray*}
&&(2\pi)^{-n/2}
e^{ -||\bfx||^2/2t}e^{-\gamma t}
\int_{W-\bff\sqrt{t}}
e^{-\frac{1}{2}|\bfz|^2}\\
&&\hspace{1.0cm}\times
e^{-\frac{1^2}{2}
\sum_{l=1}^q \left( \frac{2\sqrt{t}}{\nu_l } \sum_{m_{l-1}<u<v\leq m_l} (z_u-z_v)(a_v - a_u) \right)}\\
&&\hspace{1.0cm}\times
e^{-<\bfx,\bfa>}\prod_{j=1}^{q'}c_{\nu'_j}
\Delta(\bfz_{(m'_{j-1};m'_j]})\\
&&\hspace{0.5cm}\times
\det \left[e^{x_k f_j} x_k^{\sum_l^{q'} (j-m_{i_{l-1}} - 1)  \ind_{\{m_{i_{l-1}} < j \leq m_{i_l} \}}} \right]
t^{-\frac{1}{2} k_0  }  \ud \bfz \\
&=&
(2\pi)^{-n/2}
e^{ -||\bfx||^2/2t}e^{-\gamma t} \\
&&\hspace{1.0cm}\times
e^{-<\bfx,\bfa>}
\det \left[e^{x_k f_j} x_k^{\sum_l^{q'} (j-m_{i_{l-1}} - 1)  \ind_{\{m_{i_{l-1}} < j \leq m_{i_l} \}}} \right]
t^{-\frac{1}{2} k_0 }\\
&&\hspace{1.0cm}\times \prod_{j=1}^{q'}c_{\nu'_j}
  \,I(\bfa,t) ,
\end{eqnarray*}
where $I(\bfa,t)$ was introduced in \refs{GBM.I()}.

\subsection{Asymptotic behavior of integral.}\label{ss.asymptotic}
If $\bfs = \bfA \bfz$, where
$$
\bfA = \left(%
\begin{array}{cccccc}
  -1 & 1 & 0 & \dots& 0 & 0 \\
  0 & -1 & 1 & \dots& 0 & 0 \\
  \vdots & \vdots & \vdots & \ddots & \vdots &\vdots\\
  0 & 0 & 0 &\dots & -1 & 1 \\
  1 & 1 & 1 & \dots & 1 &1\\
\end{array},%
\right)
$$
than  $z_u - z_v = s_v  + s_{v+1} + \dots + s_{u-1}$ and
$$
|\bfz|^2 = \bfz^T \bfz = (\bfA^{-1}\bfs)^T (\bfA^{-1}\bfs) = \bfs^T (\bfA^{-1})^T \bfA^{-1}\bfs.
$$
Hence by  Lemma \ref{MatrixIdent} we have
$$
|\bfz|^2 = \frac{1}{n}s_n^2 + \bfs_{(n)}^T ((\bfA^{-1})^T \bfA^{-1})_{(n)} \bfs_{(n)},
$$
where $\bfs_{(n)}$ is obtained from $\bfs$ by deleting the $n^{\text{th}}$ coordinate
and  $\bfA_{(n)}$ is  matrix $\bfA$ without $n^{\text{th}}$ row and $n^{\text{th}}$ column.

After substitution $\bfs = \bfA \bfz$, integral $I(\bfa,t)$ is
\begin{eqnarray}
I(\bfa , t) &=&
 \idotsint\limits_{
{s_i>(f_{i}-f_{i+1})\sqrt{t},}
\atop { \text{for }
i=1,\ldots,n-1}}\quad
\int_{s_n\in R} e^{-\frac{1}{2}
(\frac{s_n^2}{n} + \bfs_{(n)}^T ((\bfA^{-1})^T \bfA^{-1})_{(n)} \bfs_{(n)})}\nonumber\\[0.5cm]
&&\hspace{1cm}
\times
e^{-\frac{1}{2}\sum_{l=1}^q \left( \frac{2\sqrt{t}}{\nu_l}
\sum_{m_{l-1}<u<v\leq m_l} (s_u + \dots + s_{v-1})(a_u - a_v) \right)}\label{secondexp} \\[0.5cm]
&&\hspace{1cm}  \times
\prod_{k=1}^{q'} H(\bfs_{(m'_{k-1} ; m'_{k})}\,\ud\bfs_{(n)} \,\ud s_n\nonumber  \\[0.5cm]
&=&
\sqrt{2\pi n}
\idotsint\limits_{s_i>(f_{i}-f_{i+1})\sqrt{t}, \atop
\text{for } i=1,\ldots,n-1}
 e^{-\frac{1}{2}
( \bfs_{(n)}^T ((\bfA^{-1})^T \bfA^{-1})_{(n)} \bfs_{(n)})}\nonumber\\[0.5cm]
&& \hspace{1cm}  \times  e^{-\frac{1}{2}\sum_{l=1}^q \left( \frac{2\sqrt{t}}{\nu_l } \sum_{m_{l-1}<u<v\leq m_l} (s_u + \dots + s_{v-1})(a_u - a_v) \right)} \\
&& \hspace{1cm}  \times\prod_{k=1}^{q'} H(\bfs_{(m'_{k-1}, m'_{k})})\,\ud\bfs_{(n)} . \nonumber
\end{eqnarray}
It is important to notice
that the second exponent in integral $I(\bfa , t )$ in \refs{secondexp} depends only on those $s_i$, where $i \notin \{m_1, \dots m_q \}$. We also see that if $m_{i-1} < k < m_i$, then the coefficient at $s_k$ in \refs{secondexp}
 is
$$
-\frac{\sqrt{t}}{m_i - m_{i-1}} (m_{i}- k) (k - m_{i-1})
\left(
\frac{a_{m_{i-1}+1} + \dots + a_i}{i-m_{i-1}} - \frac{a_{i+1} + \dots + a_{m_{i}}}{m_{i}-i}
\right)
$$
and it is strictly negative by the definition of the stable partition.
Note also that polynomials $H$ in integral $I(\bfa , t)$ depends only on $s_j$, where $j \notin \{m'_{1}, \dots , m'_{{q'}}\}$.

We now introduce new variables $\bfxi=(\xi_1,\ldots,\xi_{n-1})$ by
\begin{eqnarray} \label{podstawienie.xi}
\xi_j = \left\{
\begin{array}{ll}
\sqrt{t} s_j, & \text{ for } j \neq m_i,\  j=1,\ldots,n-1, \quad i = 1, \dots , q-1\\ [0.4cm]
 s_j, & \text{ for } j = m_i, \ j=1,\ldots,n-1, \quad i = 1, \dots , q-1.
\end{array}
\right.
\end{eqnarray}
We define function $K$ by $K\left(\bfxi_{(m'_{k-1}, m'_{k})},t\right)= H\left(\bfs_{(m'_{k-1}, m'_{k})}\right)$.

Consider now $H\left(\bfs_{(1;m'_{1})}\right)$.
Since $\bfm'$ is a subsequence of $\bfm$, we recall that $i_1$ is such that $m_{i_1} = m'_{1}$.
Similarly are defined $i_1,\ldots,i_{q'}$.
We now factorize $H\left(\bfs_{(1;m'_{1})}\right)$ into parts in which there in none of $m_i$,
 where is exactly one $m_i$, exactly two and so on.
Thus
\begin{eqnarray*}
H \left(\bfs_{(m_0;m'_{1})} \right)&=&  \prod_{k=1}^{i_1}H \left(\bfs_{(m_{k-1}; m_{k})} \right)\\
& &
\times
\prod_{k=1}^{i_1 - 1}
\prod_{
    {m_{k-1} < i \leq m_{k} }
    \atop {m_{k}<j \leq m_{k+1}}
}
(s_i + \dots + s_{j-1}) \\
& & \times
\prod_{k=1}^{i_1 - 2}
\prod_{
    {m_{k-1} < i \leq m_{k}}
    \atop {m_{k+1}<j \leq m_{k+2}}
}
(s_i + \dots + s_{j-1}) \\
& &\vdots\\
& &\times
\prod_{
    {m_{0} < i \leq m_{1}}
    \atop {m_{i_1 - 1}<j \leq m_{i_1}}
}
(s_i + \dots + s_{j-1}).
\end{eqnarray*}
We make analogous factorization for other $H(\bfs_{(m_{k-1};m_{k})})$.

\begin{Lemma}\label{asymptoticH} As $t\to\infty$
\begin{eqnarray*}
\prod_{k=1}^{q'}K(\bfxi_{(m'_{k-1}, m'_{k})},t)
& = & 
t^{- \frac{1}{2} \sum_{i=1}^q \binom{\nu_i }{2}} \prod_{i=1}^q H(\bfxi_{(m_{i-1};m_i)}) \\
& & \times 
\prod_{k=0}^{q-1}
\prod_{
    { i : \{i,i+1,\dots , i+k\} }
    \atop{ \in \{1, \dots , q \} \setminus \{ {i_1}, \dots , {i_{q'}} \}
    }}
\left(\sum_{j=0}^{k} \xi_{m_{i+j}} \right)^{\nu_i \nu_{i+k+1}} (1 + o(1))
\end{eqnarray*}

\end{Lemma}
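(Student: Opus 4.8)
The plan is to prove the lemma by combining the exact combinatorial factorization of each block polynomial $H(\bfs_{(m'_{k-1};m'_k)})$ displayed immediately before the statement with the two-scale behaviour of the substitution \refs{podstawienie.xi}. The starting point is that, by definition, $K(\bfxi_{(m'_{k-1};m'_k)},t)=H(\bfs_{(m'_{k-1};m'_k)})=\prod_{m'_{k-1}<p<r\le m'_k}(s_p+\cdots+s_{r-1})$, and that every linear form $s_p+\cdots+s_{r-1}$ splits into one of two classes according to whether the index window $[p,r-1]$ stays inside a single $\bfm$-subblock or straddles one or more internal boundaries $m_a$. Taking the product over $k=1,\ldots,q'$ collects, across all $\bfm'$-blocks, exactly the intra-subblock Vandermonde factors $\prod_{a=1}^q H(\bfs_{(m_{a-1};m_a)})$ together with all cross-subblock forms lying inside a common $\bfm'$-block; this is legitimate because the subblocks $i_{k-1}+1,\ldots,i_k$ partition $\{1,\ldots,q\}$ as $k$ runs over $1,\ldots,q'$.

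First I would treat the intra-subblock factors. For a pair $m_{a-1}<p<r\le m_a$ the window $[p,r-1]$ contains no boundary index, so by \refs{podstawienie.xi} every entry occurring is of the form $s_\ell=\xi_\ell/\sqrt t$; hence $s_p+\cdots+s_{r-1}=(\xi_p+\cdots+\xi_{r-1})/\sqrt t$ exactly. Since $H(\bfs_{(m_{a-1};m_a)})$ is a product of $\binom{\nu_a}{2}$ such forms, it equals $t^{-\binom{\nu_a}{2}/2}H(\bfxi_{(m_{a-1};m_a)})$ with no error term, and multiplying over $a$ produces the exact prefactor $t^{-\frac12\sum_{i=1}^q\binom{\nu_i}{2}}\prod_{i=1}^q H(\bfxi_{(m_{i-1};m_i)})$.

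Next I would treat the cross-subblock forms, which is where the asymptotics enters. If $p$ lies in subblock $a$ and $r$ in subblock $b=a+k+1$, both inside the same $\bfm'$-block, then the window $[p,r-1]$ contains exactly the boundary indices $m_a,m_{a+1},\ldots,m_{a+k}$ and no others; by \refs{podstawienie.xi} these boundary entries equal $\xi_{m_a},\ldots,\xi_{m_{a+k}}$ and are of order $1$, while all remaining entries are of order $t^{-1/2}$. Thus $s_p+\cdots+s_{r-1}=\big(\sum_{j=0}^{k}\xi_{m_{a+j}}\big)(1+o(1))$. There are $\nu_a\nu_b=\nu_i\nu_{i+k+1}$ choices of the pair $(p,r)$ (with $i=a$), each contributing the same limiting form, which yields the factor $\big(\sum_{j=0}^{k}\xi_{m_{i+j}}\big)^{\nu_i\nu_{i+k+1}}(1+o(1))$. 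Since a product of finitely many factors $(1+o(1))$ is again $(1+o(1))$, assembling the intra- and cross-subblock contributions gives the claimed identity.

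The main obstacle I anticipate is purely bookkeeping: one must verify that the restricted product in the statement, over $k$ and $i$ subject to $\{i,\ldots,i+k\}\subseteq\{1,\ldots,q\}\setminus\{i_1,\ldots,i_{q'}\}$, is precisely the set of surviving cross forms. The point is that a cross form between subblocks $a$ and $b=a+k+1$ occurs in some $H(\bfs_{(m'_{k-1};m'_k)})$ if and only if no $\bfm'$-boundary $m_{i_l}=m'_l$ separates them, i.e.\ iff none of the boundary indices $i,\ldots,i+k$ equals an $i_l$; conversely, pairs separated by an $\bfm'$-boundary never appear, since they belong to distinct factors $H(\bfs_{(m'_{k-1};m'_k)})$ of the product. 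Matching this index description to the multiplicity $\nu_i\nu_{i+k+1}$ and the limiting form $\sum_{j=0}^{k}\xi_{m_{i+j}}$ completes the identification.
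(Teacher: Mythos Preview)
Your proposal is correct and follows essentially the same route as the paper: you use the factorization of $H(\bfs_{(m'_{k-1};m'_k)})$ into intra-subblock and cross-subblock linear forms displayed just before the lemma, apply the substitution \refs{podstawienie.xi} so that intra-subblock forms pick up an exact $t^{-1/2}$ each while cross-subblock forms are dominated by the unscaled boundary coordinates $\xi_{m_a}$, and then collect the exponents and multiplicities. Your presentation is somewhat cleaner in that you treat all $\bfm'$-blocks at once and explain the index condition $\{i,\dots,i+k\}\subseteq\{1,\dots,q\}\setminus\{i_1,\dots,i_{q'}\}$ directly, whereas the paper writes out the first block in full and argues the rest by analogy; but the underlying argument is identical.
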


\proof
After the substitution we get
\begin{eqnarray*}
K(\bfxi_{(m'_{k-1}, m'_k)},t)
&=&
\prod_{k=1}^{i_1}H \left(\bfxi_{(m_{k-1}; m_{k}) }/ \sqrt{t} \right)\\
&&\times
\prod_{k=1}^{i_1-1}
\prod_{
    {m_{k-1} < i \leq m_{k}}
    \atop {m_{k}<j \leq m_{k+1} }
}
\left(\sum_{r=i, r\neq m_k}^{j-1} \xi_r/\sqrt{t}  + \xi_{m_k} \right) \\
&&\times
\prod_{k=1}^{i_1 - 2}
\prod_{
    {m_{k-1} < i \leq m_{k}}
    \atop {m_{k+1}<j \leq m_{k+2}}
}
\left(\sum_{r=i, r\notin \{m_k,m_{k+1}\}}^{j-1} \xi_r/\sqrt{t}  + \xi_{m_k} + \xi_{m_{k+1}}
\right) \\
&&\vdots\\
&&\times
\prod_{
    {m_{0} < i \leq m_{1}}
     \atop {m_{i_1 -1}<j \leq m_{ i_1 } }
}
\left(\sum_{r=i, r\notin \{m_1,\dots , m_{i_1-1}\}}^{j-1} \xi_r/\sqrt{t}  + \sum_{k=1}^{i_1 - 1} \xi_{m_k}
\right).
\end{eqnarray*}
It is not difficult to see that asymptotic behavior of the above expression is

\begin{eqnarray*}
K(\bfxi_{(m'_{k-1}, m'_k)},t)
&=& t^{-\frac{1}{2}\sum_{l=1}^{i_1 } \binom{\nu_i}{2}}
\prod_{k=1}^{i_1}H \left(\bfxi_{(m_{k-1}; m_{k}) } \right)\\
& &\times
\prod_{k=1}^{i_1 - 1}
(\xi_{m_k})^{\nu_k \nu_{k+1}} \\
&&
\times
\prod_{k=1}^{i_1-2}
(\xi_{m_k} + \xi_{m_{k+1}})^{\nu_k \nu_{k+2}} \\
& &  \vdots\\
& &\times
\left( \sum_{k=1}^{i_1 - 1} \xi_{m_k} \right)^{\nu_1 \nu_{i_1}} \, (1 + o(1)) .
\end{eqnarray*}

In result the whole polynomial is asymptotically

\begin{eqnarray*}
\prod_{k=1}^{q'} K(\bfxi_{(m'_{k-1}; m_{k})},t) &=&
t^{- \frac{1}{2} \sum_{i=1}^q \binom{\nu_i}{2}}
    \prod_{i=1}^q H \left(\bfxi_{(m_{i-1};m_i)} \right)
\\
&& \times
\prod_{k=0}^{q-1} \prod_{
 i : \{i,i+1,\dots , i+k\}
 \atop \in \{1, \dots , q \} \setminus \{ {i_1}, \dots , {i_{q'}} \}}
    \left(
        \sum_{j=0}^{k} \xi_{m_{i+j}}
    \right)^{\nu_i \nu_{i+k+1}} (1 + o(1)).
\end{eqnarray*}
\halmos

For substitution \refs{podstawienie.xi}, we have $\ud \bfs_{(n)} = t^{-(n-q)/2} \ud \bfxi$.
Note that $f_{k+1} = f_{k}$ for   $k \neq m_i$, and hence   the integration on the $k^{\text{th}}$ coordinate
starts from $0$. On the other hand if $k = m_i$ for some $i$, and $k \neq m'_{j}$ for every $j$,
then we also have $f_{k+1} = f_{k}$ and therefore  the integration starts from $0$.
Finally if $k = m_{i_j}$ for some $j$, then $f_{k+1} > f_{k}$ and the integrations starts from $(f_{k} - f_{k+1})\sqrt{t}$.
Hence we have after the substitution
%
%
%
\begin{eqnarray*}
I(\bfa,t)&=&
t^{-(n-q)/2} \sqrt{2\pi n}
\idotsint\limits_{\xi_j>(f_{j}-f_{j+1})\sqrt{t}, \atop \text{for } i=1,\ldots,n-1} e^{-\frac{1}{2} (
\sum_{k,l\in \{m_1, \dots, m_q\}} S_{kl} \xi_k \xi_l  )}\\
&&\times
e^{-\frac{1}{2} (
\sum_{k,l\notin \{m_1, \dots, m_q\}} S_{kl} \xi_k \xi_l/t +
2 \sum_{k\in \{m_1, \dots, m_q\},l\notin \{m_1, \dots, m_q\}} S_{kl} \xi_k \xi_l/\sqrt{t}  )}\\[0.4cm]
&&\times
e^{-\frac{1}{2}\sum_{l=1}^q \left( \frac{2}{\nu_l} \sum_{m_{l-1}<u<v\leq m_l} (\xi_u + \dots + \xi_{v-1})(a_u - a_v) \right)} \\[0.4cm]
&&\times \prod_{k=1}^{q'} K(\bfxi_{(m'_{k-1}, m'_{k})}, t )
\ud \bfxi\;.
\end{eqnarray*}


So we can clearly see that $\prod_{k=1}^{q'} K(\bfxi_{(m'_{k-1}, m'_{k})}, t )$
 depends only on $\xi_i$'s such that $i \notin  \{ m_{l_1}, \dots , m_{l_{q'}} \}$
and it can be factorized   into a part which depends only on
$i \notin \{ m_1 , \dots , m_q\}$ and a part that depends on
$i \in \{m_1, \dots , m_q \} \setminus \{ m_{l_1}, \dots , m_{l_{q'}} \}$.
Thus finally we can write
\begin{eqnarray*}
I(\bfa , t)&=&
t^{-(n-q)/2} \sqrt{2\pi n}
\idotsint\limits_{\xi_i>(f_{i}-f_{i+1})\sqrt{t} \atop \text{for } i = 1 , \dots , n-1}
e^{-\frac{1}{2} ( \sum_{k,l\in \{m_1, \dots, m_q\}} S_{kl} \xi_k \xi_l  )}\\
&&\times
e^{-\frac{1}{2} (
\sum_{k,l\notin \{m_1, \dots, m_q\}} S_{kl} \xi_k \xi_l/t +
2 \sum_{k\in \{m_1, \dots, m_q\},l\notin \{m_1, \dots, m_q\}} S_{kl} \xi_k \xi_l/\sqrt{t}  )}\\
&&\times
e^{-\frac{1}{2}\sum_{l=1}^q \left( \frac{2}{\nu_l} \sum_{m_{l-1}<u<v\leq m_l} (\xi_u + \dots + \xi_{v-1})(a_u - a_v) \right)} \\
&&\times
t^{- \frac{1}{2} \sum_{i=1}^q \binom{\nu_i}{2}} \prod_{i=1}^q H(\bfxi_{(m_{i-1};m_i)},t) \\
&&\times
\prod_{k=0}^{q-1} \prod_{i : \{i,i+1,\dots , i+k\} \atop \in \{1, \dots , q \} \setminus \{ {i_1}, \dots , {i_{q'}} \}}
\left(\sum_{j=0}^{k} \xi_{m_{i+j}}\right)^{\nu_{i}\nu_{i+k+1}}
\ud \bfxi \ (1 + o(1))
\end{eqnarray*}
Hence
\begin{eqnarray}
I(\bfa , t) &=&
t^{-(n-q)/2} t^{- \frac{1}{2} \sum_{i=1}^q \binom{\nu_i}{2}}
\sqrt{2\pi n} \nonumber\\
&&\times
\idotsint\limits_{\xi_i>0 : i = 1 , \dots , n-1 \atop i \notin \{m_1,\dots , m_q\} }
e^{-\frac{1}{2}\sum_{l=1}^q \left( \frac{2}{\nu_l} \sum_{m_{l-1}<u<v\leq m_l}
(\xi_u + \dots + \xi_{v-1})(a_u - a_v) \right)}\nonumber\\
&&\times
\prod_{i=1}^q H(\bfxi_{(m_{i-1};m_{i})})
\prod_{i \notin \{m_1,\dots , m_q\} } \ud \xi_i\nonumber\\
&&\times
\idotsint\limits_{\xi_i > 0: i \in \{m_1, \dots , m_q \} \setminus \{ m_{l_1}, \dots , m_{l_{q'}} \} }
\idotsint\limits_{\xi_i > - \infty : i \in \{ m_{l_1}, \dots , m_{l_{q'}} \} }
e^{-\frac{1}{2}
\left( \sum_{k,l\in \{m_1, \dots, m_q\}} S_{kl} \xi_k \xi_l
\right)}\nonumber\\
&&\times
\prod_{k=0}^{q-1}
\prod_{i : \{i,i+1,\dots , i+k\} \atop \in \{1, \dots , q \} \setminus \{ {l_1}, \dots , {l_{q'}} \}}
\left(\sum_{j=0}^{k} \xi_{m_{i+j}}
\right)^{\nu_{i}\nu_{i+k+1}}
\prod_{i \in \{m_1,\dots , m_q\}} \ud \xi_i \, \, (1 + o(1))\;.\label{finalI}
\end{eqnarray}
Concluding we have
\begin{eqnarray*}
I(\bfa,t) &=&
C_1 t^{-(n-q)/2} t^{- \frac{1}{2} \sum_{i=1}^q \binom{\nu_i }{2}} (1+ o(1)),
\end{eqnarray*}
where $C_1$ depends only on drift vector $\bfa$.

\subsection{Proof of Theorem \ref{GBM.MainTh}.}
Following considerations of Section \ref{ss.asymptotic}, notice first that it suffices 
to take the first term from the sum \refs{firstterm} for asymptotic analysis
because next terms consists of positive rank polynomials of variable $\bfz$ and therefore they will tend to zero 
faster after substitution
\refs{podstawienie.xi}.
For the proof of the main theorem we have to plug the asymptotics \refs{finalI}
to integral \refs{GBM.caloscZSuma}.

\end{document}